\documentclass[12pt,reqno]{amsart}

\usepackage{amsmath,amssymb,amsthm,mathtools}
\usepackage{enumitem}
\usepackage[
colorlinks=true,
allcolors=blue,
hypertexnames=false
]{hyperref}
\usepackage{microtype}
\usepackage{comment}
\hypersetup{
  pdftitle={The Discrete Lp Minkowski Problem  for Negative p},
  pdfauthor={Junjie Shan}
}

\numberwithin{equation}{section}

\newtheorem{theorem}{Theorem}[section]
\newtheorem{proposition}[theorem]{Proposition}
\newtheorem{lemma}[theorem]{Lemma}
\newtheorem{corollary}[theorem]{Corollary}

\theoremstyle{remark}
\newtheorem{remark}[theorem]{Remark}

\DeclareMathOperator{\pos}{pos}
\DeclareMathOperator{\spanop}{span}
\DeclareMathOperator{\supp}{supp}

\newcommand{\R}{\mathbb R}
\newcommand{\Kn}{\mathcal K^n}
\newcommand{\Koo}{\mathcal K_o^n}
\newcommand{\Kee}{\mathcal K_e^n}
\newcommand{\Haus}{\mathcal H}
\newcommand{\Ncal}{\mathcal N}
\newcommand{\Dcal}{\mathcal D}

\title[The Discrete $L_p$ Minkowski Problem  for Negative $p$]{The Discrete $L_p$ Minkowski Problem  for Negative $p$}
\author{Junjie Shan}
\address{
	School of Mathematics,
	Sichuan University,
	Chengdu 610064,
	Sichuan,
	China
}
\email{shanjjmath@163.com}
\date{}

\subjclass[2020]{52A40, 52A38, 52B11, 55U10}
\keywords{$L_p$ Minkowski problem, centro-affine Minkowski problem,  negative $p$, polytope,  antipodal support}

\begin{document}

\begin{abstract}

	In this paper, we establish a homological existence criterion for
	the discrete $L_p$ Minkowski problem for all $p<0$, where $p=-n$
	corresponds to the celebrated centro-affine case. For discrete measures with antipodal support, we obtain a complete
	existence characterization for arbitrary positive masses. In the even case, a solution can be chosen
	origin-symmetric.

\end{abstract}
\maketitle

\section{Introduction}

A central problem in convex geometry is to characterize geometric measures generated by convex bodies. The classical Minkowski problem asks for necessary and sufficient conditions under which a finite Borel measure on the unit sphere is the surface area measure of a convex body. The surface area measure may be viewed as the differential of the volume functional, and the solution of the Minkowski problem is a cornerstone of the Brunn--Minkowski theory with substantial influence on geometric analysis and fully nonlinear partial differential equations. 

Lutwak \cite{Lutwak} introduced the $L_p$ surface area measure
and posed the associated $L_p$ Minkowski problem.
This problem extends the classical Minkowski problem, which
corresponds to $p=1$, and has been  solved for $p\ge1$
\cite{ChouWang,HugLutwakYangZhang,Lutwak,LutwakYangZhang}.
For $0<p<1$, general existence results were established in
\cite{BianchiBoroczkyColesantiYang,ChenLiZhuMongeAmpere}.
The case $p=0$ is the celebrated logarithmic Minkowski problem,
which characterizes cone-volume measures and is closely related
to the logarithmic Brunn--Minkowski inequality
\cite{BoroczkyLutwakYangZhangLogBM}.
B\"or\"oczky, Lutwak, Yang, and Zhang
\cite{BoroczkyLutwakYangZhangLogM} completely solved its existence
problem for even measures in the class of origin-symmetric
convex bodies; for progress in the non-even case, see
\cite{BoroczkyHegedusZhu,ChenLiZhuLog,StancuDiscrete,ZhuLog}.

The $L_p$ Minkowski problem remains largely open for $p<0$.
Of particular importance is the critical case $p=-n$, known as
the centro-affine Minkowski problem. In the smooth setting,
it amounts to prescribing the affine distance, a longstanding
open problem in affine differential geometry.

For $-n<p<0$, Chou and Wang \cite{ChouWang} and
Bianchi, B\"or\"oczky, Colesanti, and Yang
\cite{BianchiBoroczkyColesantiYang} established existence
for broad classes of absolutely continuous measures.
In the plane, Chen \cite{ChenPlanar} established the corresponding
existence result for continuous even densities.
Bryan, Ivaki, and Scheuer \cite{BryanIvakiScheuer} gave a
unified curvature-flow approach to the smooth even problem
for $p>-n$.
For $p<-n$, Guang, Li, and Wang \cite{GuangLiWang}
proved existence for positive smooth densities.
In the critical case $p=-n$, Jian, Lu, and Zhu
\cite{JianLuZhu} established sufficient conditions for
existence within the class of convex bodies symmetric
with respect to the coordinate hyperplanes;
other partial results were obtained in
\cite{GuangLiWangCentro,JianLuWang,LiCentroAffineMultiplicity,LuWang}.
For discrete data, Zhu \cite{ZhuCentroAffine,ZhuNegative} proved existence for
every $p<0$ when the support is not contained in a closed
hemisphere and has no essential subspace.
The latter condition excludes supports containing an antipodal
pair; in particular, the result does not apply to  even
 measures.
For further results on the $L_p$ Minkowski problem with $p<0$,
see \cite{BianchiBoroczkyColesantiSmooth,
	DouZhu,DuPlanar,JianLuWangNonuniqueness,MilmanIsospectral}.

In the author's earlier work \cite{ShanGroup}, existence was
established for all $p<0$ when the prescribed measure is invariant
under an irreducible subgroup of $O(n)$.

Many new geometric measures of convex bodies have been introduced
and extensively studied. The associated characterization problems
include the Orlicz Minkowski problem, the dual Minkowski problem,
the $L_p$ Aleksandrov problem, the chord Minkowski problem,
and the affine dual Minkowski problem; see
\cite{CaiLengWuXiAffineDual,HaberlLutwakYangZhang,HuangLutwakYangZhangDual,%
	HuangLutwakYangZhangAleksandrov,HuangYangZhangSurvey,LutwakXiYangZhangChord}.

In this paper, we study the discrete $L_p$ Minkowski problem for
all $p<0$. We establish a homological sufficient condition for
existence that depends only on the prescribed normal directions.
This yields a complete existence characterization for discrete
measures with antipodal support. When the measure is even, a solution can be chosen
origin-symmetric.

The proof combines a variational argument with algebraic topology.
We associate a simplicial complex with the prescribed directions.
Nonvanishing reduced homology of this complex ensures an interior
critical point of the constrained volume functional and hence
a solution.

For a finite set $U=\{u_1,\ldots,u_N\}\subset S^{n-1}$, define its positive hull by
\begin{equation}\label{eq:intro-positive-hull}
	\pos U
	=\left\{\sum_{i=1}^N\lambda_i u_i:\lambda_i\geq0\right\}.
\end{equation}
Assume that $\pos U=\R^n$, set $E=\{1,\ldots,N\}$, and define the positive-spanning deletion complex
\begin{equation}\label{eq:intro-deletion-complex}
	\Dcal_+(U)
	=\left\{J\subset E:
	\pos\{u_i:i\in E\setminus J\}=\R^n\right\}.
\end{equation}
Thus, a set of indices belongs to $\Dcal_+(U)$ precisely when the directions remaining after those indices are deleted still positively span $\R^n$. We write $\widetilde H_k$ for
the $k$th reduced homology group, with the convention
$\widetilde H_{-1}(\{\varnothing\};\mathbb Z_2)=\mathbb Z_2$.

\begin{theorem}\label{thm:intro-homological}
	Let $p<0$, let $U=\{u_1,\ldots,u_N\}\subset S^{n-1}$ consist of distinct points, and suppose that $\pos U=\R^n$. If
	\begin{equation}\label{eq:intro-homology-condition}
		\widetilde H_k\bigl(\Dcal_+(U);\mathbb Z_2\bigr)\neq0
	\end{equation}
	for some $k\geq-1$, then, for every $\alpha_1,\ldots,\alpha_N>0$, there exists a polytope $P$ containing the origin in its
	interior such that
	\[
	S_p(P,\cdot)=\sum_{i=1}^N\alpha_i\delta_{u_i}.
	\]

\end{theorem}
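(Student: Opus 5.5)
We argue variationally: the discrete $L_p$ problem is reduced to finding a critical point of a function of the support numbers, and a topological argument built on the homology hypothesis \eqref{eq:intro-homology-condition} produces an interior critical point. For $h=(h_1,\dots,h_N)$ with $h_i\ge0$ we set
\[
P(h)=\bigcap_{i=1}^N\{x:\langle x,u_i\rangle\le h_i\}.
\]
Since $\pos U=\R^n$, $P(h)$ is a compact polytope. For $h_i>0$ it contains the origin in its interior.

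We consider the functional
\[
\Phi(h)=\sum_{i=1}^N\alpha_i h_i^{p}
\]
on the constraint set $M=\{h\in(0,\infty)^N:\ \mathrm{vol}(P(h))=1\}$. Because $\partial_{h_i}\mathrm{vol}(P(h))=S(P(h),\{u_i\})$, the Lagrange condition at an interior critical point reads
\[
p\,\alpha_i h_i^{p-1}=\lambda\, S(P(h),\{u_i\}).
\]
Hence $h_i^{1-p}S(P,\{u_i\})=c\,\alpha_i$ for some constant $c$. Facets with $S=0$ are impossible because $\alpha_i>0$. Scaling with homogeneity $n-p\neq0$ (as $p<0$) removes the constant $c$, and this gives $S_p(P,\cdot)=\sum\alpha_i\delta_{u_i}$.

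The first step is to understand $\Phi$ near the boundary of $M$. Since $p<0$, $\Phi\to+\infty$ as some $h_i\to0$, while $\Phi$ stays bounded as coordinates go to infinity.

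\begin{itemize}
\item A sequence in $M$ can degenerate in two ways. Some $h_i\to0$, or some $h_i\to\infty$ with the corresponding facet eventually disappearing or becoming irrelevant.
\item If $J=\{i:h_i\to\infty\}$, volume normalization and boundedness force the remaining directions $\{u_i:i\notin J\}$ to positively span $\R^n$. Otherwise $P$ would be unbounded in the limit, or the volume constraint would push other $h_i\to0$.
\item So the "ends at infinity" of sublevel sets $\{\Phi\le t\}$ for large $t$ are indexed by faces $J\in\Dcal_+(U)$.
\end{itemize}

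The plan is to show that for $t$ large, the superlevel region near infinity (equivalently the set $\{\Phi\le t\}$ minus a large compact set) is homotopy equivalent to the order complex of $\Dcal_+(U)$, minus the empty face, i.e.\ to $\Dcal_+(U)$. Meanwhile $M$ itself is contractible: it is a graph over the open simplex of directions, using scaling. Concretely:

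\begin{itemize}
\item Cover the "infinite" part of $M$ by regions $A_J$, in which exactly the coordinates in $J$ are large.
\item Each $A_J$ is contractible, and $A_J\cap A_{J'}\ne\varnothing$ iff $J\cup J'\in\Dcal_+(U)$.
\item The nerve lemma then identifies the end with $\Dcal_+(U)$.
\end{itemize}

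If $\Phi$ had no critical point in $M$, its (negative) gradient flow would deform a large sublevel set onto the end region, and Lusternik--Schnirelmann or Morse-type deformation arguments would make the end homotopy equivalent to $M$, which is contractible. This contradicts $\widetilde H_k(\Dcal_+(U);\mathbb Z_2)\ne0$. The case $k=-1$ means $\Dcal_+(U)=\{\varnothing\}$, so the end is empty. In that case $\Phi$ is proper on $M$ and attains a minimum, which is the easy compact case.

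The main obstacle is the boundary analysis. We must make the Palais--Smale-type condition precise: along sequences going to infinity in $M$ we need $\Phi$ to have no asymptotic critical points, and the flow must respect the decomposition into the $A_J$. The difficulty is that $\mathrm{vol}(P(h))$ is only piecewise smooth in $h$ (it is $C^1$ when the origin is interior), and that large $h_i$ make facets vanish, so the gradient component $S(P,\{u_i\})$ becomes $0$. I would first try to handle this by the following steps.

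\begin{itemize}
\item Reparametrize by $s_i=h_i^{p}$, which is bounded and tends to $0$ at infinity, compactifying $M$ to a manifold with corners whose faces are $\{s_j=0:j\in J\}$, $J\in\Dcal_+(U)$.
\item Verify that $\Phi=\sum\alpha_i s_i$ has strictly positive derivative transverse to each corner face.
\item With that transversality, standard stratified Morse theory will show that the absence of interior critical points makes the sublevel set deform onto the corner boundary.
\item That boundary is homotopy equivalent to $\Dcal_+(U)$ by the nerve argument, giving the contradiction.
\end{itemize}
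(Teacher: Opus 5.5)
Your proposal is correct and is essentially the paper's argument in dual normalization. Identify the level set $M=\{W=1\}$ radially with $\operatorname{int}\Delta_\alpha$. Then $\Phi|_M=W^{-p/n}$, your corner compactification becomes $X=\operatorname{int}\Delta_\alpha\cup S$, and your transversality check (inward derivative $\alpha_j>0$ in the local product structure near a good face) is condition~(iii) of Lemma~\ref{lem:relative-critical-point}, which the paper gets from Lemma~\ref{lem:exit-derivative}. The paper builds the pseudo-gradient deformation of all of $X$ onto $S$ by hand, and obtains $S\simeq\lvert\Dcal_+(U)\rvert$ from the complement model of combinatorial Alexander duality rather than a nerve lemma.

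If you keep your route, two repairs are needed:
\begin{enumerate}
\item Run the deformation and the nerve argument on the whole compactified level set and its whole boundary, not on a fixed sublevel set. The boundary part of a sublevel set is not a priori homotopy equivalent to $\lvert\Dcal_+(U)\rvert$.
\item Take $A_J$ to be where at least the coordinates in $J$ are large. As written (``exactly''), the $A_J$ are pairwise disjoint, so $A_J\cap A_{J'}=A_{J\cup J'}$ fails.
\end{enumerate}
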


The homological condition in Theorem~\ref{thm:intro-homological} depends only on $U$ and guarantees solvability for every choice
of positive masses. If $U$ is positively spanning and has no essential subspace, then the required reduced homology is nonzero, hence Theorem~\ref{thm:intro-homological} contains the result in
\cite{ZhuNegative} as a special case.

For antipodal supports spanning $\R^n$, the homological condition
in Theorem~\ref{thm:intro-homological} always holds.
This yields the following complete existence characterization,
without requiring the masses at opposite points to agree.

\begin{theorem}\label{thm:intro-antipodal}
	Let \(p<0\), let \(u_1,\ldots,u_N\in S^{n-1}\) satisfy
	\(u_i\neq\pm u_j\) whenever \(i\neq j\), and let
	\(\alpha_i^+,\alpha_i^->0\) for \(1\leq i\leq N\). Set
	\[
	\mu
	=
	\sum_{i=1}^N
	\bigl(\alpha_i^+\delta_{u_i}+\alpha_i^-\delta_{-u_i}\bigr).
	\]
	There exists a polytope \(K\) containing the origin in its
	interior such that
	\begin{equation}\label{eq:intro-antipodal-target}
		S_p(K,\cdot)=\mu
	\end{equation}
	if and only if
	\begin{equation}\label{eq:intro-spanning-condition}
		\spanop\{u_1,\ldots,u_N\}=\R^n.
	\end{equation}
\end{theorem}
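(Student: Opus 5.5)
Both directions will be handled separately; sufficiency will be obtained from Theorem~\ref{thm:intro-homological}.

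\emph{Necessity.} Suppose $S_p(K,\cdot)=\mu$ for a polytope $K$ with $o\in\operatorname{int}K$. Then $\supp S_p(K,\cdot)=\supp S(K,\cdot)$ is the set of facet normals of $K$. The facet normals of a polytope with nonempty interior are never contained in a closed hemisphere, so they positively span $\R^n$. These normals are exactly $\{\pm u_i\}$. Hence $\R^n=\pos\{\pm u_i\}=\spanop\{u_1,\ldots,u_N\}$, which is \eqref{eq:intro-spanning-condition}.

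\emph{Sufficiency.} Put $U=\{u_1,\ldots,u_N,-u_1,\ldots,-u_N\}$. These $2N$ points are distinct because $u_i\neq\pm u_j$ for $i\neq j$ and $u_i\neq -u_i$. We have $\pos U=\spanop\{u_i\}=\R^n$. It therefore suffices to show $\widetilde H_k(\Dcal_+(U);\mathbb Z_2)\neq0$ for some $k\ge-1$; Theorem~\ref{thm:intro-homological} with masses $\alpha_i^\pm$ then gives $K$.

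\emph{Key step: the homology of $\Dcal_+(U)$.} Index the ground set by $E=\{1^\pm,\ldots,N^\pm\}$. For antipodal sets we have $\pos V=\R^n$ if and only if $V$ is "balanced enough". The clean case is a retained set containing both $u_i$ and $-u_i$ for a spanning family of indices $i$; a retained set may also positively span without being closed under negation. I plan to identify the nonvanishing class through a Lefschetz/Alexander-duality viewpoint rather than by computing the complex directly.

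The complement complex is $\Dcal_+(U)^\vee=\{F\subset E:\pos(U\setminus(E\setminus F))\neq\R^n\}$, i.e.\ the sets $F$ such that $\{u_j:j\in F\}$ lies in a closed hemisphere. By combinatorial Alexander duality on the $(2N-1)$-simplex, $\widetilde H_k(\Dcal_+)\cong\widetilde H^{2N-k-4}(\mathcal C)$, where $\mathcal C$ is the complex of index sets whose vectors lie in a closed hemisphere (i.e.\ do not positively span). I expect $\mathcal C$ to be homotopy equivalent to $S^{n-1}$ via a nerve argument. Cover $\mathcal C$ by the subcomplexes $\mathcal C_v=\{F:\langle u_j,v\rangle\ge0\ \forall j\in F\}$ for $v\in S^{n-1}$. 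Each $\mathcal C_v$ is a full simplex, and intersections of these are full simplices or empty. A nerve/Quillen fiber lemma then identifies $\mathcal C$ with the complement of the central hyperplane arrangement's positive-spanning region. Concretely, $\mathcal C\simeq S^{n-1}\setminus\{\text{nothing}\}$ in the antipodal symmetric case, since no subset covering the whole sphere arises as a face. In the antipodal case the natural $\mathbb Z_2$-action $j^\pm\mapsto j^\mp$ is free on $\mathcal C$, because a set containing $\pm u_i$ can still lie in a hemisphere only if it is orthogonal to $u_i$. Alternatively, and more robustly, I would use a Borsuk--Ulam argument: the free antipodal action on $\mathcal C$ together with a map $\mathcal C\to S^{n-1}$ forces $\widetilde H^{n-1}(\mathcal C;\mathbb Z_2)\neq0$. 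This yields nonzero $\widetilde H_{2N-n-3}(\Dcal_+(U);\mathbb Z_2)$.

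\emph{Main obstacle.} The main difficulty is rigorously establishing $\widetilde H^{n-1}(\mathcal C;\mathbb Z_2)\neq0$. This will be done via a covering by hemisphere simplices and the nerve theorem, showing $\mathcal C$ is homotopy equivalent to $S^{n-1}$. I will also check the edge cases in the indexing of Alexander duality (e.g.\ $k=-1$ when $\Dcal_+=\{\varnothing\}$, which occurs when $N=n$).
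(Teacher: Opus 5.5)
Your necessity argument, and your reduction of sufficiency to Theorem~\ref{thm:intro-homological}, are fine and match the paper. The reduction uses $U=\{\pm u_i\}$, which consists of $2N$ distinct points with $\pos U=\spanop\{u_i\}=\R^n$. The gap is in your key step. The claim that the non-positive-spanning complex $\mathcal C=\Ncal_+(U)$ is homotopy equivalent to $S^{n-1}$ is false for antipodal configurations.

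Already for $N=n$ and $u_i=e_i$ it fails. Every proper subset of $\{\pm e_1,\ldots,\pm e_n\}$ lies in a closed hemisphere: if $\varepsilon e_k$ is omitted, the rest lies in $\{x:\langle x,-\varepsilon e_k\rangle\ge0\}$. So $\mathcal C=\partial\Delta_E$ with $|E|=2n$, hence $|\mathcal C|\simeq S^{2n-2}$ and $\widetilde H^{n-1}(\mathcal C;\mathbb Z_2)=0$.

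The nerve heuristic breaks because closed hemispheres do not form a good cover here. The hemispheres centered at $u_i$ and $-u_i$ meet in the great subsphere $u_i^\perp\cap S^{n-1}$, which is not contractible. The answer $S^{n-1}$ is correct for the open-hemisphere complex $\mathcal O(U)$. But here $\mathcal O(U)\subsetneq\Ncal_+(U)$, since $\{i^+,i^-\}\in\Ncal_+(U)\setminus\mathcal O(U)$. This is exactly the essential-subspace phenomenon that puts antipodal supports outside Lemma~\ref{lem:no-essential-subspaces}.

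Your Borsuk--Ulam fallback fails too, for two reasons. The involution $j^\pm\mapsto j^\mp$ is not free on $|\mathcal C|$: it fixes the midpoint of the edge $\{i^+,i^-\}$. Also, an equivariant map $\mathcal C\to S^{n-1}$ by itself gives no lower bound on homology; for instance, $S^0$ maps equivariantly to $S^{n-1}$.

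Your Alexander duality index is also off by one. On $2N$ vertices, $\widetilde H_k(\Dcal_+(U))\cong\widetilde H^{2N-k-3}(\Ncal_+(U))$. Your predicted degree $2N-n-3$ is wrong for $n\ge3$. For $n=N=3$ and $u_i=e_i$ one gets $\Dcal_+(U)=\{\varnothing\}$, whose reduced homology is nonzero only in degree $-1$, not $0$.

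What is actually needed is the correct homotopy type of the closed-hemisphere complex. For an ample antipodal configuration, $|\Ncal_+(U)|$ is a nonempty wedge of $(2n-2)$-spheres; this is Lemma~\ref{thm:closed-hemisphere-topology}, due to Ayzenberg--Beketov--Magai, and it is what the paper uses. It gives $\widetilde H_{2n-2}(\Ncal_+(U);\mathbb Z_2)\neq0$. Alexander duality with $m=2N$ then gives $\widetilde H_{2N-2n-1}(\Dcal_+(U);\mathbb Z_2)\neq0$, with the $\widetilde H_{-1}$ convention covering $N=n$. Without this input, or an equivalent computation of the homology of $\Ncal_+(U)$, your sufficiency direction is not established.
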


For even measures, restricting the variational argument to
origin-symmetric polytopes yields the following characterization.

\begin{corollary}\label{cor:intro-even}
	Let \(p<0\), and let \(\mu\) be a nonzero finite even discrete
	Borel measure on \(S^{n-1}\). There exists an origin-symmetric $n$-dimensional polytope
	\(K\) such that
	\begin{equation}\label{eq:intro-even-target}
		S_p(K,\cdot)=\mu
	\end{equation}
	if and only if \(\mu\) is not concentrated on any great subsphere.
\end{corollary}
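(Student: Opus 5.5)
The statement to prove is Corollary~\ref{cor:intro-even}. The plan is to reduce it to Theorem~\ref{thm:intro-antipodal}, plus a symmetrization step for the origin-symmetric conclusion.

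\emph{Necessity.} Write $\mu=\sum_{i=1}^N \alpha_i(\delta_{u_i}+\delta_{-u_i})$ with $u_i\neq\pm u_j$ for $i\neq j$. Suppose $K$ is an $n$-dimensional polytope with $S_p(K,\cdot)=\mu$. Then $\mu$ is a positive multiple of $h_K^{1-p}$ times the surface area measure on the facet normals, so the support of $\mu$ is exactly the set of facet normals of $K$. Facet normals of a bounded polytope positively span $\R^n$, so they are not contained in any great subsphere. (For origin-symmetric $K$ the origin is interior, so $h_K>0$ and $S_p$ is well defined.)

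\emph{Sufficiency.} Assume $\mu$ is not concentrated on a great subsphere, i.e.\ $\spanop\{u_i\}=\R^n$. Theorem~\ref{thm:intro-antipodal} with $\alpha_i^\pm=\alpha_i$ gives a polytope, but not necessarily an origin-symmetric one. So I would rerun the variational argument of Theorem~\ref{thm:intro-homological} inside the class of origin-symmetric polytopes
\[
P(z)=\bigcap_i\{x:|x\cdot u_i|\le z_i\},\qquad z\in(0,\infty)^N.
\]
Here I restrict the functional $\Phi(z)=\sum_i 2\alpha_i z_i^{p}$ (suitably normalized) under the constraint $V(P(z))=1$. The approach mirrors the general case: find a critical point of the volume constrained by the $L_p$ functional on the symmetric slice. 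The Lagrange condition gives
\[
\frac{\partial V}{\partial z_i}=2|F_i|=\lambda\,\alpha_i z_i^{p-1}
\]
for each antipodal pair, which by symmetry is exactly $S_p(P,\cdot)=c\mu$. Rescaling removes $c$, since $p\neq n$; for $p=-n$ homogeneity must be checked separately, but $S_p$ has degree $n-p\neq0$ for $p<0$, so rescaling works.

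\emph{Existence of an interior critical point.} This is the main obstacle. I would reproduce the homological argument for the symmetric deletion complex. A set $J$ of pair indices is deletable iff $\{\pm u_i: i\notin J\}$ still positively spans, i.e.\ iff $\{u_i:i\notin J\}$ spans $\R^n$. This is the independence complex of the dual matroid of $\{u_i\}$, which is a matroid complex. Matroid complexes are homotopy equivalent to wedges of spheres of top dimension and are nonempty or equal to $\{\varnothing\}$, so their top reduced homology is nonzero whenever the reduced Euler characteristic is nonzero. If the complex is a cone, i.e.\ the dual matroid has a coloop, which means some $u_i$ is in every basis, I would first handle that direction separately or use the vanishing-free alternative. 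I expect the cleanest route is to invoke whatever version of the homology-to-critical-point lemma underlies Theorem~\ref{thm:intro-antipodal}, since antipodal supports are treated there. I would then observe that its proof is equivariant under $x\mapsto -x$: average the data and restrict to the fixed-point set, on which the same minimax and boundary analysis runs verbatim. This yields an origin-symmetric solution.
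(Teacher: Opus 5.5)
Your outline follows the paper's route. You restrict to the symmetric polytopes $P_e$, identify the relevant deletion complex as $\Dcal_e(U)=\operatorname{IN}(M(U)^*)$, and feed its top homology into the relative critical-point lemma. The necessity direction and the Lagrange/rescaling step are fine.

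\emph{The gap.} It lies in the step you yourself call the main obstacle: the nonvanishing of $\widetilde H_{N-n-1}(\Dcal_e(U);\mathbb Z_2)$. Your translation of the cone condition there is wrong.
\begin{itemize}
\item A coloop of $M(U)^*$ is an index lying in every basis of $M(U)^*$. Equivalently it lies in \emph{no} basis of $M(U)$, so it is a loop, $u_i=0$.
\item What you wrote, some $u_i$ in every basis of $M(U)$, describes a coloop of $M(U)$. That only means the vertex $i$ is absent from $\Dcal_e(U)$, which is harmless. For example, when $N=n$ every $u_i$ is of this kind, and $\Dcal_e(U)=\{\varnothing\}$ has $\widetilde H_{-1}\neq0$.
\item The missing line is this: each $u_i\neq0$ extends to a basis of $\R^n$ drawn from $U$. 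Hence $M(U)$ has no loops and $M(U)^*$ has no coloops. By Bj\"orner's results on matroid complexes (as used in the paper), $\Dcal_e(U)$ is then a nonempty wedge of $(N-n-1)$-spheres when $N>n$.
\end{itemize}
So the cone case you propose to ``handle separately'' is both misidentified and vacuous. Without this observation the homological input is simply not established.

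\emph{The fallback does not fill the gap.} Equivariance under swapping the parameters of $u_i$ and $-u_i$ does let the boundary analysis run verbatim on the fixed-point subsimplex; this is what the paper does. However, the critical-point lemma there needs $H_{N-n}(X_e,S_e;\mathbb Z_2)\neq0$, where $S_e\simeq\lvert\Dcal_e(U)\rvert$. This does not follow from the nonvanishing of $\widetilde H_{2N-2n-1}(\Dcal_+(Z);\mathbb Z_2)$ used for Theorem~\ref{thm:intro-antipodal}. On the full $2N$-parameter simplex, even an equivariant deformation argument only gives a critical point somewhere in the interior. That point may lie off the diagonal, so the resulting polytope may be non-symmetric. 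The matroid computation on the symmetric slice is therefore indispensable. With the correction above, your argument becomes the paper's.
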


We also give a planar example showing that, for general discrete
measures, positive spanning of the support alone does not guarantee
solvability of the $L_p$ Minkowski problem. For this fixed support,
the homological condition in Theorem~\ref{thm:intro-homological}
fails, and solvability depends on the choice of positive masses.

\section{Preliminaries}\label{sec:preliminaries}

 Throughout, $n\geq2$.
The Euclidean inner product and norm on $\R^n$ are denoted by
$\langle\cdot,\cdot\rangle$ and $|\cdot|$.
The unit ball and unit sphere are denoted by $B$ and $S^{n-1}$.
We write $V$ for $n$-dimensional Lebesgue measure, $\Haus^k$
for $k$-dimensional Hausdorff measure, and $\sigma$ for spherical
Lebesgue measure on $S^{n-1}$. For background on convex bodies, see the books of
Schneider \cite{Schneider} and Gardner \cite{Gaedner gtbook}.

Let $\Kn$ denote the class of convex bodies in $\R^n$,
let $\Koo$ denote the class of convex bodies containing the
origin in their interiors, and let $\Kee$ denote the class of
origin-symmetric convex bodies.
A discrete measure means a positive Borel measure with finite
support.

For a compact convex set $K\subset\R^n$, its support function is
\begin{equation}
	h_K(u)=\max\{\langle x,u\rangle:x\in K\},
	\qquad u\in S^{n-1}.
\end{equation}
For $K\in\Koo$, let $S_K$ denote its surface area measure. More precisely, let $\nu_K$ denote the outer unit normal map,
which is uniquely defined at $\Haus^{n-1}$-almost every point
of $\partial K$. For every Borel set $\omega\subset S^{n-1}$,
\begin{equation}\label{eq:surface-area-measure}
	S_K(\omega)
	=
	\Haus^{n-1}\bigl(\nu_K^{-1}(\omega)\bigr).
\end{equation}
For $p\in\R$, the $L_p$ surface area measure of $K$ is defined by
\begin{equation}\label{eq:intro-Lp-measure}
	dS_p(K,\cdot)=h_K^{1-p}\,dS_K.
\end{equation}
If $P\in\Koo$ is a polytope, $u$ is an outer unit normal of a facet of $P$, and that facet has area $a_P(u)$, then \eqref{eq:intro-Lp-measure} gives
\begin{equation}\label{eq:polytope-Lp-mass}
 S_p(P,\{u\})=h_P(u)^{1-p}a_P(u).
\end{equation}
For $c>0$, the scaling relation is
\begin{equation}\label{eq:Lp-scaling}
 S_p(cK,\cdot)=c^{n-p}S_p(K,\cdot).
\end{equation}

For a finite set $U\subset S^{n-1}$, its positive hull $\pos U$ is defined by \eqref{eq:intro-positive-hull}. The polar cone of $\pos U$ is
\[
 (\pos U)^\circ
 =\{v\in\R^n:\langle v,u\rangle\leq0\text{ for every }u\in U\}.
\]
Consequently,
\begin{equation}\label{eq:positive-span-separation}
 \begin{aligned}
  \pos U=\R^n
  &\quad\Longleftrightarrow\quad (\pos U)^\circ=\{0\}\\
  &\quad\Longleftrightarrow\quad
  U\text{ is not contained in a closed hemisphere}.
 \end{aligned}
\end{equation}
If $U=-U$, then $\pos U=\spanop U$.

We shall  use the following elementary consequence of the classical Minkowski theory.

\begin{lemma}\label{lem:discrete-solution-polytope}
Let $p\in\R$, let $\mu$ be a finite discrete Borel measure on $S^{n-1}$, and suppose that $K\in\Koo$ satisfies $S_p(K,\cdot)=\mu$. Then $K$ is a polytope,
\begin{equation}\label{eq:support-equality}
 \supp S_K=\supp\mu,
\end{equation}
and $\pos(\supp\mu)=\R^n$.
\end{lemma}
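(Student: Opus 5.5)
Since $K\in\Koo$, the support function $h_K$ is continuous and strictly positive on $S^{n-1}$, with $0<\min h_K\le h_K\le\max h_K<\infty$. Hence the density $h_K^{1-p}$ in \eqref{eq:intro-Lp-measure} is bounded above and below by positive constants. The plan is to use this to transfer the discreteness of $\mu$ to $S_K$, and then to invoke two classical facts: a convex body whose surface area measure is discrete is a polytope, and the surface area measure of any convex body is not concentrated on a closed hemisphere.

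\textbf{Step 1: support equality.} From $dS_p(K,\cdot)=h_K^{1-p}\,dS_K$ and the positivity of $h_K^{1-p}$, the two measures are mutually absolutely continuous, with $dS_K=h_K^{p-1}\,d\mu$. A Borel set $\omega$ is null for $S_K$ if and only if it is null for $\mu$. Since both measures are finite Borel measures on a compact metric space, their supports coincide, which gives \eqref{eq:support-equality}. In particular $S_K$ is a discrete measure with finite support $\{u_1,\dots,u_m\}$, and
\[
S_K=\sum_{j=1}^m h_K(u_j)^{p-1}\mu(\{u_j\})\,\delta_{u_j}.
\]

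\textbf{Step 2: $K$ is a polytope.} Because $K$ is $n$-dimensional, classical Minkowski theory says $S_K$ is not concentrated on any great subsphere and satisfies $\int u\,dS_K=0$. Minkowski's existence theorem for discrete data therefore yields a polytope $P$ with facet normals $u_j$ and facet areas $S_K(\{u_j\})$, so that $S_P=S_K$. The uniqueness part of Minkowski's theorem, which follows from the equality case of Minkowski's mixed-volume inequality, then gives $K=P+x$ for some translation $x$. Hence $K$ is a polytope. This is the step where one must cite the right classical result; the cleanest reference is Schneider's book, which states that a convex body with $S_K$ finitely supported is a polytope.

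\textbf{Step 3: positive spanning.} Suppose, for contradiction, that $\pos(\supp\mu)\neq\R^n$. By \eqref{eq:positive-span-separation} there is a nonzero $v$ with $\langle v,u\rangle\le0$ for all $u\in\supp\mu=\supp S_K$. The closure relation $\int\langle v,u\rangle\,dS_K(u)=0$ then forces $\langle v,u\rangle=0$ on $\supp S_K$. So $S_K$ is concentrated on the great subsphere $v^\perp\cap S^{n-1}$, which is impossible for an $n$-dimensional body. Therefore $\pos(\supp\mu)=\R^n$.

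The only nontrivial input is the classical fact in Step 2. Everything else is elementary, given the positivity of $h_K$ guaranteed by $K\in\Koo$.
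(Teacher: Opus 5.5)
Your proof is correct and follows the paper's argument: mutual absolute continuity via $dS_K=h_K^{p-1}\,d\mu$, then Minkowski's discrete existence theorem plus uniqueness up to translation to conclude that $K$ is a polytope. The only difference is in the last step, where you derive positive spanning from the closure relation $\int u\,dS_K(u)=0$ and the nondegeneracy of $S_K$, whereas the paper simply notes that the facet normals of a bounded full-dimensional polytope positively span $\R^n$; both arguments are valid.
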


\begin{proof}
Since $h_K$ is strictly positive on $S^{n-1}$, equation \eqref{eq:intro-Lp-measure} gives $dS_K=h_K^{p-1}\,d\mu$. Thus, $S_K$ is a finite discrete measure and \eqref{eq:support-equality} holds. Since $S_K$ is the surface area measure of a convex body with nonempty interior, its centroid is at the origin and it is not concentrated on any great subsphere. By the discrete Minkowski existence theorem \cite[Theorem~8.2.1]{Schneider}, there is a polytope $P$ such that $S_P=S_K$. Minkowski's uniqueness theorem \cite[Theorem~8.1.1]{Schneider} then implies that $K$ is a translate of $P$. Hence $K$ is a polytope. The facet normals of a bounded full-dimensional polytope positively span $\R^n$, proving the last assertion.
\end{proof}

Let \(E\) be a finite vertex set, let
\(\Delta_E=\{J:J\subset E\}\) be the simplex on \(E\), and let
\(\partial\Delta_E=\{J:J\subsetneq E\}\) be its boundary complex.
For each simplicial subcomplex \(\mathcal C\subset\Delta_E\), we write
\(\lvert\mathcal C\rvert\) for its geometric realization.
For example, if \(E=\{1,2,3\}\), then \(\lvert\Delta_E\rvert\) is a
triangle together with its interior, whereas
\(\lvert\partial\Delta_E\rvert\) is the union of its three edges.
Let \(\mathcal A\) be a simplicial subcomplex of
\(\partial\Delta_E\). Its combinatorial Alexander dual is
\begin{equation}\label{eq:combinatorial-Alexander-dual}
 \mathcal A^*=\{J\subset E:E\setminus J\notin\mathcal A\}.
\end{equation}
The standard complement model for combinatorial Alexander duality
\cite[Introduction]{BjornerTancer} shows that
\begin{equation}\label{eq:Alexander-complement-model}
	\lvert\partial\Delta_E\rvert
	\setminus
	\lvert\mathcal A\rvert
	\simeq
	\lvert\mathcal A^*\rvert.
\end{equation}
 If \(E\) has \(m\) elements, combinatorial Alexander duality \cite[Theorem~1.1]{BjornerTancer} gives
\begin{equation}
 \widetilde H_j(\mathcal A^*;\mathbb Z_2)
 \cong
 \widetilde H^{m-j-3}(\mathcal A;\mathbb Z_2).
\end{equation}
Since $\mathbb Z_2$ is a field, the universal coefficient theorem implies
\begin{equation}\label{eq:Alexander-nonvanishing}
 \widetilde H_j(\mathcal A^*;\mathbb Z_2)\neq0
 \quad\Longleftrightarrow\quad
 \widetilde H_{m-j-3}(\mathcal A;\mathbb Z_2)\neq0.
\end{equation}
We use reduced homology with the convention
\begin{equation}\label{eq:minus-one-homology}
 \widetilde H_{-1}(\{\varnothing\};\mathbb Z_2)=\mathbb Z_2,
\end{equation}
where the realization of the complex $\{\varnothing\}$ is empty.

Let \(Z=(z_1,\ldots,z_m)\) be a finite list of points in \(S^{n-1}\),
where repetitions are allowed. We call \(Z\) \textbf{ample} if every open
hemisphere contains at least one of the points \(z_i\), or equivalently, if
the points \(z_1,\ldots,z_m\) are not contained in any closed
hemisphere. The
open-hemisphere complex of \(Z\) is the simplicial complex on the
vertex set \(\{1,\ldots,m\}\) defined by
\begin{equation}
 \mathcal O(Z)
 =\{I\subset\{1,\ldots,m\}:\{z_i:i\in I\}\text{ is contained  in an open hemisphere}\}.
\end{equation}
In the notation of \cite{AyzenbergBeketovMagai}, this is the
constellation complex \(\operatorname{Stel}(Z)\).
If \(Z\) is ample, the open hemispheres centered at the points of
\(Z\) form a good cover of \(S^{n-1}\). In this case,
\cite[Theorem~1]{AyzenbergBeketovMagai} shows that the geometric
realization of \(\mathcal O(Z)\) is homotopy equivalent to
\(S^{n-1}\), that is,
\begin{equation}\label{eq:open-hemisphere-homotopy}
	\lvert\mathcal O(Z)\rvert\simeq S^{n-1}.
\end{equation}

For a finite configuration $U=\{u_1,\ldots,u_N\}\subset S^{n-1}$, define its non-positive-spanning complex by
\begin{equation}\label{eq:non-positive-spanning-complex}
 \Ncal_+(U)
 =\left\{I\subset\{1,\ldots,N\}:\pos\{u_i:i\in I\}\neq\R^n\right\}.
\end{equation}
A nonzero proper subspace $L\subset\R^n$ is called essential for $U$ if $U\cap L$ is not contained in any closed hemisphere of the relative sphere $L\cap S^{n-1}$.

\begin{lemma}\label{lem:no-essential-subspaces}
	Let \(U=\{u_1,\ldots,u_N\}\subset S^{n-1}\) be a finite set that is
	not contained in any closed hemisphere and has no essential subspaces.
	Then
	\begin{equation}\label{eq:open-equals-closed}
		\Ncal_+(U)=\mathcal O(U).
	\end{equation}
	Consequently,
	\begin{equation}\label{eq:zhu-deletion-homology}
		\widetilde H_{N-n-2}\bigl(\Dcal_+(U);\mathbb Z_2\bigr)\neq0.
	\end{equation}
\end{lemma}

\begin{proof}
Every subset contained in an open hemisphere fails to positively span $\R^n$, so $\mathcal O(U)\subset\Ncal_+(U)$. Suppose that $I\in\Ncal_+(U)$ does not belong to $\mathcal O(U)$. By separation, there is a nonzero vector $v$ such that $\langle v,u_i\rangle\geq0$ for all $i\in I$. On the other hand, failure to lie in an open hemisphere is equivalent, by strict separation, to
\[
 0\in\operatorname{conv}\{u_i:i\in I\}.
\]
Choose an inclusion-minimal subset \(C\subset I\) such that
\(0\in\operatorname{conv}\{u_i:i\in C\}\). Then there exist coefficients
\(\lambda_i>0\), \(i\in C\), such that
\(\sum_{i\in C}\lambda_i=1\) and
\(\sum_{i\in C}\lambda_i u_i=0\). Taking the inner product of \(\sum_{i\in C}\lambda_i u_i=0\) with
\(v\),  we obtain
\(\langle v,u_i\rangle=0\) for every \(i\in C\). Hence $L=\spanop\{u_i:i\in C\}$ is a nonzero proper subspace.

The set $\{u_i:i\in C\}$ is not contained in a closed hemisphere of $L\cap S^{n-1}$. Indeed, if $w\in L$ satisfied $\langle w,u_i\rangle\geq0$ for all $i\in C$, then  taking the
inner product of \(\sum_{i\in C}\lambda_i u_i=0\) with \(w\) would give $\langle w,u_i\rangle=0$ for every $i\in C$. Since these vectors span $L$, this would force $w=0$. Thus, $L$ is essential for $U$, a contradiction. This proves \eqref{eq:open-equals-closed}.

The hypothesis that $U$ is not contained in any closed hemisphere implies that $U$ is ample. Equations \eqref{eq:open-equals-closed} and
\eqref{eq:open-hemisphere-homotopy} show that
\(\lvert\Ncal_+(U)\rvert\) is homotopy equivalent to \(S^{n-1}\).
Hence
\(\widetilde H_{n-1}(\Ncal_+(U);\mathbb Z_2)\cong\mathbb Z_2\).  By the definition of \(\Dcal_+(U)\) in
\eqref{eq:intro-deletion-complex}, together with
\eqref{eq:combinatorial-Alexander-dual}, we have
\(\Dcal_+(U)=\Ncal_+(U)^*\). Applying
\eqref{eq:Alexander-nonvanishing} with
\(\mathcal A=\Ncal_+(U)\), \(m=N\), and \(j=N-n-2\) yields
\eqref{eq:zhu-deletion-homology}. 
\end{proof}

The finite list \(Z=(z_1,\ldots,z_m)\) is called
\textbf{antipodal} if every point of \(S^{n-1}\) and its antipode
occur equally many times in \(Z\). By
\eqref{eq:positive-span-separation}, \(I\in\Ncal_+(Z)\) if and only if
the points \(z_i\), \(i\in I\), are contained in a closed hemisphere.
In the terminology of \cite{AyzenbergBeketovMagai},
\(\Ncal_+(Z)=\operatorname{BStel}(Z)\).
 We shall use the following result
\textup{(\cite[Theorem~2 and Corollary~2.10]{AyzenbergBeketovMagai})}.

\begin{lemma}\label{thm:closed-hemisphere-topology}
	Let \(Z=(z_1,\ldots,z_m)\) be an ample antipodal finite list of points
	in \(S^{n-1}\). Then 
	\(\lvert\Ncal_+(Z)\rvert\) is homotopy equivalent to a nonempty wedge
	of \((2n-2)\)-dimensional spheres.
\end{lemma}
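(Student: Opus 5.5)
The plan is to argue by induction on the number of antipodal pairs in \(Z\), decomposing \(\Ncal_+(Z)\) around one pair, after settling a minimal base case by hand. By \eqref{eq:positive-span-separation}, a set \(I\) of indices lies in \(\Ncal_+(Z)\) exactly when \(\{z_i:i\in I\}\) fails to positively span \(\R^n\).

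\emph{Base case and reduction.} Write the antipodal list as \(Z=(w_1,-w_1,\ldots,w_r,-w_r)\). Ampleness means \(\spanop\{w_j\}=\R^n\), so \(r\geq n\).

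When \(r=n\), the \(w_j\) form a basis. Then \(\pos\{z_i:i\in I\}=\R^n\) only when \(I\) is all \(2n\) indices, because a proper subset misses some \(\pm w_j\) and then lies in a closed half-space. Hence \(\Ncal_+(Z)=\partial\Delta_{\{1,\ldots,2n\}}\), whose realization is \(S^{2n-2}\).

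For \(r>n\), I would choose a pair \(a=w_r\), \(b=-w_r\) such that \(Z_0=Z\setminus\{a,b\}\) is still ample. This is possible, since one can discard a vector not needed for spanning. Repeated values cause no trouble: \(\Ncal_+\) depends only on the underlying points, with each repeated index behaving like a cone vertex twin.

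\emph{Gluing step.} Split the faces of \(\Ncal_+(Z)\) according to which of \(a,b\) they contain:
\[
\Ncal_+(Z)=\Ncal_+(Z_0)\;\cup\;\bigl(a*\mathcal L_a\bigr)\;\cup\;\bigl(b*\mathcal L_b\bigr)\;\cup\;\bigl(\{a,b\}*\mathcal M\bigr).
\]
Here \(\mathcal L_a=\{I\subset Z_0:\ I\cup\{a\}\text{ lies in a closed hemisphere}\}\), and \(\mathcal L_b\) is defined similarly.

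Faces containing both \(a\) and \(b\) must lie in a closed hemisphere whose boundary contains \(\pm a\). Let \(\pi\) be the orthogonal projection onto \(a^\perp\), normalized to \(S^{n-2}\), with points \(\pm a\) sent to "free" indices that lie in every face. Then
\[
\mathcal M=\Ncal_+(\pi Z_0)\quad\text{inside }a^\perp .
\]
Since \(\pi Z_0\) is an ample antipodal list in \(S^{n-2}\), the induction hypothesis (inducting on \(n\) as well) gives
\[
\lvert\mathcal M\rvert\simeq\bigvee S^{2n-4},
\]
with the free indices only coning and not changing homotopy type. The full simplex on \(\{a,b\}\) is a \(1\)-simplex, so the join \(\{a,b\}*\mathcal M\) contributes a double suspension in the relevant relative term, giving \((2n-2)\)-spheres.

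I would next show that \(\Ncal_+(Z_0)\cup a*\mathcal L_a\cup b*\mathcal L_b\) is homotopy equivalent to \(\Ncal_+(Z_0)\). The cones are attached along \(\mathcal L_a,\mathcal L_b\subset\Ncal_+(Z_0)\), and I would compare them to the open-hemisphere complexes via \eqref{eq:open-hemisphere-homotopy}. A homotopy pushout (or Mayer--Vietoris with \(\mathbb Z_2\) coefficients, plus simple connectivity for \(n\geq 2\) via van Kampen and Hurewicz/Whitehead) should then show that \(\Ncal_+(Z)\) is \(\Ncal_+(Z_0)\) with additional \((2n-2)\)-cells attached along null-homotopic maps. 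This yields a wedge of \((2n-2)\)-spheres, which is nonempty because the base case already contributes one sphere.

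\emph{Main obstacle.} The hard step is controlling the links \(\mathcal L_a,\mathcal L_b\) and the attaching maps. Closed hemispheres containing \(a\) do not form a good cover: their intersections can contain antipodal pairs. This is exactly why the usual nerve theorem fails here and why the top dimension jumps from \(n-1\) to \(2n-2\).

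My first attempt would be to show that \(\mathcal L_a\) is contractible, or at least that the inclusion \(\mathcal L_a\hookrightarrow\Ncal_+(Z_0)\) is null-homotopic, using a straight-line deformation of separating normals toward \(a\). Failing that, I would fall back on a purely homological computation via the Alexander dual \(\Dcal_+(Z)\) and \eqref{eq:Alexander-nonvanishing}, combined with a separate argument for simple connectivity, which is in substance the route of \cite{AyzenbergBeketovMagai}.
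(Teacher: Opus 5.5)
\textbf{There is a genuine gap: the step $Y\simeq\Ncal_+(Z_0)$ is never proved, and it is where the whole difficulty of the lemma lies.}

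The splitting
\[
\Ncal_+(Z)=\Ncal_+(Z_0)\cup(a*\mathcal L_a)\cup(b*\mathcal L_b)\cup(\{a,b\}*\mathcal M)
\]
is correct, and the $\mathcal M$-part works more simply than you indicate. Put $Y=\Ncal_+(Z_0)\cup(a*\mathcal L_a)\cup(b*\mathcal L_b)$. Then $Y\cap(\{a,b\}*\mathcal M)=(\{a\}*\mathcal M)\cup(\{b\}*\mathcal M)=\Sigma\mathcal M$, so $\Ncal_+(Z)$ is the mapping cone of $\Sigma\mathcal M\hookrightarrow Y$. If $Y\simeq\bigvee S^{2n-2}$, this map from a wedge of $(2n-3)$-spheres is null-homotopic by connectivity, and $\Ncal_+(Z)\simeq Y\vee\Sigma^2\mathcal M$. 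No Mayer--Vietoris or Whitehead argument is needed; $\mathbb Z_2$-homology would not suffice for Whitehead anyway.

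One statement is wrong. If $Z_0$ still contains a copy of $\pm a$, the free indices make $\mathcal M$ a cone, hence contractible, not a wedge of $(2n-4)$-spheres. This does no harm, since a contractible $\mathcal M$ just adds no spheres. You can also avoid it by first discarding repeated points. That is legitimate because the link of a repeated vertex is the closed star of its twin in the remaining complex, which is a cone.

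To get $Y\simeq\Ncal_+(Z_0)$ you need $\mathcal L_a$ and $\mathcal L_b$ to be contractible. A null-homotopic inclusion is not enough: it only gives $Y\simeq\Ncal_+(Z_0)\vee\Sigma\mathcal L_a\vee\Sigma\mathcal L_b$.

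Contractibility is not reachable by a straight-line deformation of normals. $\mathcal L_a$ is the nerve of the cover of $\{v\in S^{n-1}:\langle v,a\rangle\le0\}$ by the sets $\{v\in S^{n-1}:\langle v,a\rangle\le0,\ \langle v,z\rangle\le0\}$, $z\in Z_0$. An intersection of these is a whole great subsphere inside $a^\perp\cap S^{n-1}$ whenever $\pos(I\cup\{a\})$ is a proper linear subspace. That is the same non-good-cover phenomenon as in the lemma itself.

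Concretely, $I\cup\{a\}$ positively spans $\R^n$ if and only if $\pi(I)$ positively spans $a^\perp$ and $-a\in\pos I$. Hence $\mathcal L_a=\mathcal K\cup\mathcal M$ with $\mathcal K=\{I:-a\notin\pos I\}$. The piece $\mathcal K$ is contractible: it is the nerve of closed half-spaces covering a gnomonic chart of the open hemisphere centred at $a$. You would still have to show that $\mathcal K\cap\mathcal M\hookrightarrow\mathcal M$ is a homotopy equivalence. That is a statement about closed-hemisphere complexes of the same kind and difficulty as the lemma, and it would need its own induction, for example over complexes $\{I: I\cup A \text{ lies in a closed hemisphere}\}$ for general finite $A$.

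For $n=2$ the claim is easy, because $\mathcal L_a$ is a cone whose apex is a point of $Z_0$ adjacent to $a$ on the circle. You give no argument for $n\ge3$.

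Your fallback, an Alexander-dual homology computation plus simple connectivity, is just the Ayzenberg--Beketov--Magai argument. That is exactly what the paper relies on: it does not prove this lemma but quotes their Theorem~2 and Corollary~2.10. As written, your proposal reduces the lemma to an unproved contractibility statement rather than proving it.
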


For the even case, we shall use a deletion complex defined by
linear spanning rather than positive spanning. The following
lemma establishes the nonvanishing of its reduced homology.

\begin{lemma}\label{lem:even-deletion-homology}
	Let \(U=\{u_1,\ldots,u_N\}\subset S^{n-1}\) span \(\R^n\), and let
	\(E=\{1,\ldots,N\}\). Define
	\begin{equation}\label{eq:even-deletion-complex}
		\Dcal_e(U)
		=
		\left\{
		J\subset E:
		\spanop\{u_i:i\in E\setminus J\}=\R^n
		\right\}.
	\end{equation}
	Then
	\begin{equation}\label{eq:even-deletion-homology}
		\widetilde H_{N-n-1}
		\bigl(\Dcal_e(U);\mathbb Z_2\bigr)\neq0.
	\end{equation}
\end{lemma}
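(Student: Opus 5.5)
The plan is to recognise $\Dcal_e(U)$ as a matroid independence complex and use the classical topology of such complexes. Let $M$ be the linear matroid on $E$ represented by the vectors $u_1,\ldots,u_N$. Since $U$ spans $\R^n$, $M$ has rank $n$. A set $J\subset E$ lies in $\Dcal_e(U)$ exactly when $E\setminus J$ contains a basis of $M$. Equivalently, $J$ is contained in the complement of a basis, i.e.\ $J$ is independent in the dual matroid $M^*$. Thus
\[
\Dcal_e(U)=\mathrm{IN}(M^*),
\]
the independence complex of a matroid of rank $N-n$. Every facet of this complex has dimension $N-n-1$.

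First step: I would invoke Björner's theorem that matroid independence complexes are shellable. It follows that $\lvert\Dcal_e(U)\rvert$ is homotopy equivalent to a wedge of $(N-n-1)$-dimensional spheres. The number of spheres equals $\lvert\widetilde\chi(\mathrm{IN}(M^*))\rvert$, which is the Tutte evaluation $T_{M^*}(0,1)=T_M(1,0)$. Hence \eqref{eq:even-deletion-homology} is equivalent to $T_M(1,0)\neq0$. It holds over any coefficients, in particular $\mathbb Z_2$, because the homology of a wedge of spheres is free.

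Second step: I would show $T_M(1,0)\neq0$ using the standard fact that $T_M(1,0)>0$ precisely when $M$ has no loops. This is proved by deletion--contraction. For $e$ neither a loop nor a coloop, $T_M=T_{M\setminus e}+T_{M/e}$, and all values at $(1,0)$ are nonnegative. A coloop contributes a factor $x=1$, and a loop would contribute the factor $y=0$. Since each $u_i$ is a unit vector, no element of $M$ is a loop, so the count is positive. Dually, $M^*$ has no coloops, which is exactly the condition for the independence complex to have nonvanishing top homology. The degenerate case $N=n$ is consistent: then $\Dcal_e(U)=\{\varnothing\}$ and $\widetilde H_{-1}=\mathbb Z_2$ by \eqref{eq:minus-one-homology}.

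The main obstacle is keeping the argument self-contained rather than merely citing matroid theory. If a direct proof is preferred, I would run the deletion--contraction induction on $N$ topologically. Pick $e=N$ such that $U\setminus\{u_N\}$ still spans; if no such $e$ exists, then $N=n$ and we are in the base case. Then $\Dcal_e(U)$ is the union of the cone over the link of $\{N\}$ with the deletion $\Dcal_e(U)\setminus\{N\}$. Here the link of $\{N\}$ is $\Dcal_e(U\setminus\{u_N\})$, and the deletion is the deletion complex of the contraction $U/u_N$ in $\R^n/\R u_N$. The Mayer--Vietoris sequence then shows that the top homology of $\Dcal_e(U)$ surjects onto, or contains, a copy of the top homology of the contraction complex. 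That complex is nonzero by induction, because contraction by a nonloop keeps all remaining vectors nonzero, up to parallel elements, which are harmless.
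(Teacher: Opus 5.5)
Your main argument (the first two steps) is correct and is essentially the paper's proof. The paper also identifies $\Dcal_e(U)$ with $\operatorname{IN}(M(U)^*)$ and cites Bj\"orner for the fact that its reduced homology is free and concentrated in degree $N-n-1$. It gets nonvanishing from the criterion that this homology vanishes iff some element lies in every basis of $M(U)^*$, and it excludes that case by extending each $u_i$ to a basis. Your condition $T_{M^*}(0,1)=T_M(1,0)>0$ for loopless $M$ is the same criterion in Tutte-polynomial form, with a deletion--contraction proof in place of the cited exercise.

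Your optional ``direct'' route, however, fails as written, because parallel elements are not harmless under contraction. Contracting $u_N$ sends every vector parallel to $u_N$ to $0$ in $\R^n/\R u_N$. A zero vector is a cone point of the deletion complex, so $\Dcal_e(U/u_N)$ then has vanishing reduced homology, and the induction hypothesis cannot supply a nonzero class there. This already happens for $U=\{e_1,-e_1,e_2\}\subset\R^2$, where every non-coloop has a parallel partner. Even when $U$ has no antipodal pairs, the contracted configurations to which your induction hypothesis is applied generally do contain parallel vectors. In such cases the nonzero class comes from the link, not the contraction.

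The fix is to induct through the link $\Dcal_e(U\setminus\{u_N\})$, which stays loopless. Since the link has dimension $N-n-2$, Mayer--Vietoris gives a map $\widetilde H_{N-n-1}(\Dcal_e(U))\to\widetilde H_{N-n-2}(\Dcal_e(U\setminus\{u_N\}))$. This map is onto provided $\widetilde H_{N-n-2}$ of the vertex-deleted subcomplex vanishes. So you must also carry ``homology concentrated in the top degree'' through the same induction.
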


\begin{proof}
	Since \(U\) spans \(\R^n\), the empty set belongs to
	\(\Dcal_e(U)\). If \(J\in\Dcal_e(U)\) and \(J'\subset J\), then
	\(E\setminus J'\) contains \(E\setminus J\); hence the vectors
	\(u_i\), \(i\in E\setminus J'\), still span \(\R^n\). Thus
	\(\Dcal_e(U)\) is a simplicial complex.
	
	Define
	\[
	\mathcal I(U)
	=
	\left\{
	I\subset E:
	\{u_i:i\in I\}\text{ is linearly independent in }\R^n
	\right\},
	\]
	and let \(M(U)=(E,\mathcal I(U))\) be the vector matroid represented
	by \(u_1,\ldots,u_N\). A basis of \(M(U)\) is a set
	\(B\in\mathcal I(U)\) that is maximal with respect to inclusion.
Since
\(u_1,\ldots,u_N\) span \(\R^n\), such a set \(B\) is precisely an index
set for a vector basis of \(\R^n\).  Hence every basis of \(M(U)\) has \(n\)
	elements.
	
	The bases of the dual matroid \(M(U)^*\) are, by definition, the sets
	\(E\setminus B\), where \(B\) ranges over the bases of \(M(U)\). A
	subset \(J\subset E\) is independent in \(M(U)^*\) precisely when
	\(J\subset E\setminus B\) for some basis \(B\) of \(M(U)\). Following
	\cite{BjornerMatroids}, write
	\[
	\operatorname{IN}\bigl(M(U)^*\bigr)
	=
	\left\{
	J\subset E:
	J\subset E\setminus B
	\text{ for some basis \(B\) of \(M(U)\)}
	\right\}.
	\]
	Thus \(M(U)^*=(E,\operatorname{IN}(M(U)^*))\).
	
	For \(J\subset E\), the condition
	\(J\in\operatorname{IN}(M(U)^*)\) is equivalent to
	\(E\setminus J\) containing a basis of \(M(U)\). Since the bases of
	\(M(U)\) are precisely the index sets of vector bases of \(\R^n\),
	this is equivalent to \(\{u_i:i\in E\setminus J\}\) spanning
	\(\R^n\). Therefore,
	\begin{equation}\label{eq:even-deletion-independence-complex}
		\Dcal_e(U)=\operatorname{IN}\bigl(M(U)^*\bigr).
	\end{equation}
	
	Since \(U\) spans \(\R^n\), necessarily \(N\geq n\). Every basis of
	\(M(U)^*\) is of the form \(E\setminus B\), where \(B\) is a basis
	of \(M(U)\), and therefore has \(N-n\) elements. Hence \(M(U)^*\)
	has rank \(N-n\), where the rank of a matroid is the common
	cardinality of its bases.
	
	Assume first that \(N>n\). We verify that no element of \(E\) belongs
	to every basis of \(M(U)^*\). Fix \(i\in E\). Since \(u_i\neq0\) and
	\(U\) spans \(\R^n\), the vector \(u_i\) can be extended by vectors
	from \(U\) to a vector basis of \(\R^n\). Thus there is a basis
	\(B_i\) of \(M(U)\) with \(i\in B_i\). The set \(E\setminus B_i\)
	is a basis of \(M(U)^*\) that does not contain \(i\). Since \(i\) was
	arbitrary, no element of \(E\) belongs to every basis of \(M(U)^*\).
	
	By \cite[Theorem~7.8.1]{BjornerMatroids}, if \(Q\) is a rank-\(s\)
	matroid, then the reduced integral homology of
	\(\operatorname{IN}(Q)\) vanishes outside degree \(s-1\), and its
	homology group in degree \(s-1\) is free abelian. By
	\cite[Exercise~7.39(a)]{BjornerMatroids}, all reduced integral
	homology groups of \(\operatorname{IN}(Q)\) vanish if and only if
	some element belongs to every basis of \(Q\). Applying these facts
	to \(Q=M(U)^*\), and using
	\eqref{eq:even-deletion-independence-complex}, we conclude that
	\(\widetilde H_{N-n-1}(\Dcal_e(U);\mathbb Z)\) is a nonzero free
	abelian group. Hence, for some integer \(b\geq1\),
	\[
	\widetilde H_{N-n-1}
	\bigl(\Dcal_e(U);\mathbb Z\bigr)
	\cong \mathbb Z^b.
	\]
	Since
	\(\widetilde H_{N-n-2}(\Dcal_e(U);\mathbb Z)=0\), the universal
	coefficient theorem gives
	\[
	\widetilde H_{N-n-1}
	\bigl(\Dcal_e(U);\mathbb Z_2\bigr)
	\cong
	\widetilde H_{N-n-1}
	\bigl(\Dcal_e(U);\mathbb Z\bigr)
	\otimes_{\mathbb Z}\mathbb Z_2
	\cong
	(\mathbb Z_2)^b\neq0.
	\]
	
	If \(N=n\), then \(u_1,\ldots,u_N\) form a vector basis of \(\R^n\).
	Consequently, no proper subfamily spans \(\R^n\), and
	\(\Dcal_e(U)=\{\varnothing\}\). Since \(N-n-1=-1\),
	\eqref{eq:even-deletion-homology} follows from the convention
	\eqref{eq:minus-one-homology}.
\end{proof}

\section{The parameter space in \texorpdfstring{\(p\)-power}{p-power} coordinates}\label{sec:parameter}

\subsection{Volume with fixed normal directions}
\label{subsec:fixed-normal-volume}
Fix $p<0$. Throughout Sections~\ref{sec:parameter}--\ref{sec:topology}, let $U=\{u_1,\ldots,u_N\}\subset S^{n-1}$ consist of distinct points and satisfy $\pos U=\R^n$. For $h=(h_1,\ldots,h_N)\in(0,\infty)^N$, set
\begin{equation}\label{eq:one-sided-polytope}
 P(h)=\bigcap_{i=1}^N
 \{x\in\R^n:\langle x,u_i\rangle\leq h_i\},
 \qquad
 \mathcal V(h)=V(P(h)).
\end{equation}
Since \(\pos U=\R^n\), each \(P(h)\) is bounded and contains the
origin in its interior. For each $i$, let
\begin{equation}
 F_i(h)=P(h)\cap\{x:\langle x,u_i\rangle=h_i\},
 \qquad
 a_i(h)=\Haus^{n-1}(F_i(h)).
\end{equation}
If the $i$-th inequality in \eqref{eq:one-sided-polytope} does not define a facet of $P(h)$, then
$F_i(h)$ is either empty or has dimension at most $n-2$, and hence
$a_i(h)=0$.

\begin{lemma}\label{lem:one-sided-volume-C1}
	The function \(\mathcal V\) in
	\eqref{eq:one-sided-polytope} belongs to
	\(C^1((0,\infty)^N)\), and
	\begin{equation}\label{eq:one-sided-volume-derivative}
		\frac{\partial\mathcal V}{\partial h_i}(h)=a_i(h),
		\qquad 1\leq i\leq N.
	\end{equation}
\end{lemma}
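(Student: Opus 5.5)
We prove the derivative formula directly and then deduce continuity of the partial derivatives. This gives $C^1$, since continuous partial derivatives imply differentiability.

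\emph{Step 1: monotonicity and the volume difference.} Fix $h\in(0,\infty)^N$, an index $i$, and $t>0$ small. Set $h^t=h+te_i$. Then $P(h)\subset P(h^t)$, and
\[
P(h^t)\setminus P(h)=\{x\in P(h^t):h_i<\langle x,u_i\rangle\le h_i+t\}.
\]
We compare this slab region with the cylinder over $F_i(h)$. For $s\in(h_i,h_i+t]$, the slice $P(h^t)\cap\{\langle x,u_i\rangle=s\}$ equals
\[
\{x: \langle x,u_j\rangle\le h_j\ (j\ne i),\ \langle x,u_i\rangle=s\}.
\]
Call it $G_i(s)$. Fubini gives
\[
\mathcal V(h^t)-\mathcal V(h)=\int_{h_i}^{h_i+t}\Haus^{n-1}(G_i(s))\,ds.
\]
The analogous formula holds for $t<0$, now with slices of $P(h)$.

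\emph{Step 2: continuity of slice areas.} The set $C_i=\{x:\langle x,u_j\rangle\le h_j,\ j\neq i\}$ is a closed convex set, possibly unbounded. However, $P(h^t)$ is bounded uniformly for small $|t|$, because all $h_j$ stay in a compact subset of $(0,\infty)$ and $\pos U=\R^n$. So the slices $G_i(s)$ for $s$ near $h_i$ lie in a fixed ball and are compact convex sets. The map $s\mapsto C_i\cap\{\langle x,u_i\rangle=s\}$ is continuous in the Hausdorff metric from the right at $s=h_i$, and also from the left there since $h_i$ lies in the interior of the range of $\langle\cdot,u_i\rangle$ on $C_i\cap$ ball. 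The dimension drop that could break this only occurs at the extreme level.

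When the slice is $(n-1)$-dimensional, $(n-1)$-volume is continuous under Hausdorff convergence of convex bodies in a fixed hyperplane direction. When the slice is lower-dimensional, the areas tend to $0$ by upper semicontinuity of volume. In both cases $\Haus^{n-1}(G_i(s))\to a_i(h)$ as $s\to h_i$. Hence $\partial_i\mathcal V(h)=a_i(h)$.

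\emph{Step 3: continuity of $a_i$ in $h$.} We show $h\mapsto a_i(h)$ is continuous on $(0,\infty)^N$. The face $F_i(h)$ is the polytope $\{y\in u_i^\perp+h_iu_i:\langle y,u_j\rangle\le h_j,\ j\neq i\}$, which is bounded. The main obstacle is that this polytope may jump in dimension, so Hausdorff continuity must be argued carefully.

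I would use the standard fact that $h\mapsto P(h)$ is Hausdorff continuous when the defining system stays bounded with nonempty interior. Then $F_i(h)=P(h)\cap\{\langle x,u_i\rangle=h_i\}$ can be handled as follows:
\begin{itemize}
\item \emph{Upper semicontinuity} of $a_i$ follows since limits of faces lie in $F_i(h)$.
\item \emph{Lower semicontinuity} holds when $a_i(h)>0$: the relative interior points of $F_i(h)$ satisfy the other inequalities strictly, so small perturbations keep them, up to translating by $(h_i'-h_i)u_i$.
\item When $a_i(h)=0$, upper semicontinuity alone already gives continuity.
\end{itemize}

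Alternatively, one can avoid the Hausdorff argument by citing the classical result (Schneider, Lemma~7.5.3 type) that the volume of a polytope with fixed normals is differentiable with derivative equal to the facet areas. Continuity then follows from the continuity of mixed volumes $V(P(h),\dots,P(h),\cdot)$.
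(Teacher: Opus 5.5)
Your argument is correct in substance, but it takes a different route from the paper. The paper does not prove the lemma directly. It invokes Lemma~3.2 of Hug, Lutwak, Yang, and Zhang, noting only that \(\pos U=\R^n\) means \(U\) is not contained in a closed hemisphere and that \(S(P(h),\{u_i\})=a_i(h)\).

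You instead give a self-contained proof in three steps:
\begin{enumerate}
\item Fubini slicing in the direction \(u_i\) reduces both one-sided derivatives to continuity of \(s\mapsto\Haus^{n-1}(G_i(s))\) at \(s=h_i\).
\item Upper and lower semicontinuity of facet areas under Hausdorff convergence of \(P(h)\) gives continuity of \(a_i\).
\item Continuous partial derivatives then yield \(C^1\).
\end{enumerate}
The citation buys brevity. Your route shows exactly where the hypotheses enter: \(\pos U=\R^n\) gives the uniform boundedness, and distinctness of the \(u_j\) enters as described below. Your closing alternative (Schneider plus continuity of mixed volumes) is essentially the paper's strategy of outsourcing the fact.

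Two points need tightening.

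\emph{Step~2.} Your claim that \(h_i\) lies in the interior of the range of \(\langle\cdot,u_i\rangle\) on \(C_i\) is false when the \(i\)th inequality is not facet-defining. In that case \(h_i\geq\sup_{C_i}\langle\cdot,u_i\rangle\), the slices for \(s>h_i\) are empty, and Hausdorff continuity from the right can fail. The conclusion survives, but you should state the correct dichotomy:
\begin{itemize}
\item Either the hyperplane \(\{\langle x,u_i\rangle=h_i\}\) meets \(\operatorname{int}C_i\). Then \(F_i(h)\) is \((n-1)\)-dimensional and the sections vary continuously.
\item Or \(F_i(h)\) is empty or a face of \(C_i\) with outer normal \(u_i\). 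That face has dimension at most \(n-2\) precisely because \(u_i\neq u_j\) for \(j\neq i\), so all nearby slice areas tend to \(0=a_i(h)\).
\end{itemize}
Distinctness is genuinely needed here. If \(u_j=u_i\) and \(h_j=h_i\), the right derivative in \(h_i\) is \(0\) while \(a_i(h)>0\).

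\emph{Step~3.} Your lower-semicontinuity step asserts that relative interior points of \(F_i(h)\) satisfy all other inequalities strictly. This requires \(u_j\neq u_i\). It also requires \(h_j>0\), to rule out a tight constraint with \(u_j=-u_i\). State both explicitly.
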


\begin{proof}
	Lemma~3.2 of \cite{HugLutwakYangZhang} applies directly:
	\(\pos U=\R^n\) is equivalent to \(U\) not being contained in a
	closed hemisphere, and the surface area measure of \(P(h)\) satisfies
	\(S(P(h),\{u_i\})=a_i(h)\).
\end{proof}

Introduce the \(p\)-power coordinates
\begin{equation}\label{eq:inverse-support-variables}
 t_i=h_i^p,
 \qquad 1\leq i\leq N,
\end{equation}
so that $h_i=t_i^{1/p}$. Define
\begin{equation}
 W(t)=\mathcal V(t_1^{1/p},\ldots,t_N^{1/p})
\end{equation}
and
\begin{equation}\label{eq:one-sided-mass}
 m_i(t)=t_i^{(1-p)/p}
 a_i(t_1^{1/p},\ldots,t_N^{1/p}).
\end{equation}
By \eqref{eq:polytope-Lp-mass}, $m_i(t)$ is the $L_p$ surface area mass at $u_i$.

\begin{corollary}\label{cor:inverse-volume-derivative}
The function $W$ belongs to $C^1((0,\infty)^N)$ and satisfies
\begin{equation}\label{eq:inverse-volume-derivative}
 \frac{\partial W}{\partial t_i}(t)=\frac1p m_i(t).
\end{equation}
Moreover,
\begin{equation}\label{eq:inverse-volume-homogeneity}
 W(ct)=c^{n/p}W(t),
 \qquad c>0.
\end{equation}
\end{corollary}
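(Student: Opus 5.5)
The plan is to get all three claims from Lemma~\ref{lem:one-sided-volume-C1} by the chain rule, together with the homogeneity of volume. The substitution map
\[
\phi:(0,\infty)^N\to(0,\infty)^N,\qquad \phi(t)=(t_1^{1/p},\ldots,t_N^{1/p}),
\]
is a smooth diffeomorphism, since each coordinate map \(s\mapsto s^{1/p}\) is smooth on \((0,\infty)\). Because \(W=\mathcal V\circ\phi\) and \(\mathcal V\in C^1((0,\infty)^N)\), the composition \(W\) is also \(C^1\).

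For the derivative, \(\phi\) acts coordinatewise, so
\[
\frac{\partial W}{\partial t_i}(t)=\frac{\partial\mathcal V}{\partial h_i}(\phi(t))\cdot\frac{d}{dt_i}t_i^{1/p}
=a_i(\phi(t))\cdot\frac1p\,t_i^{1/p-1}.
\]
Since \(1/p-1=(1-p)/p\), comparing with \eqref{eq:one-sided-mass} gives \(\frac1p m_i(t)\), which is \eqref{eq:inverse-volume-derivative}.

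For homogeneity, \(\phi(ct)=c^{1/p}\phi(t)\). The set \(P(\lambda h)\) equals \(\lambda P(h)\) for \(\lambda>0\), because \(\langle x,u_i\rangle\le\lambda h_i\) holds iff \(\langle x/\lambda,u_i\rangle\le h_i\). Hence \(\mathcal V(\lambda h)=\lambda^n\mathcal V(h)\). Taking \(\lambda=c^{1/p}\) yields \(W(ct)=c^{n/p}W(t)\).

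None of the steps is a real obstacle. The only point to check is that \(\phi\) maps \((0,\infty)^N\) onto itself, so that Lemma~\ref{lem:one-sided-volume-C1} applies at every point \(\phi(t)\). This holds because \(p\neq0\) and all \(t_i>0\).
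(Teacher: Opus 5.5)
Your proposal is correct and follows essentially the same route as the paper: the chain rule applied to Lemma~\ref{lem:one-sided-volume-C1} under the substitution \(h_i=t_i^{1/p}\), and homogeneity from the fact that replacing \(t\) by \(ct\) scales every support number by \(c^{1/p}\), hence the volume by \(c^{n/p}\). Your explicit checks that \(\phi\) is a smooth diffeomorphism of \((0,\infty)^N\) and that \(P(\lambda h)=\lambda P(h)\) fill in details the paper leaves implicit.
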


\begin{proof}
By Lemma~\ref{lem:one-sided-volume-C1} and the chain rule,
\[
 \frac{\partial W}{\partial t_i}(t)
 =\frac1p t_i^{1/p-1}
 a_i(t_1^{1/p},\ldots,t_N^{1/p})
 =\frac1p m_i(t),
\]
which proves \eqref{eq:inverse-volume-derivative}. Replacing $t$ by $ct$ multiplies every support number by $c^{1/p}$ and therefore multiplies the volume by $c^{n/p}$, proving \eqref{eq:inverse-volume-homogeneity}.
\end{proof}

\subsection{The weighted parameter simplex}
\label{subsec:weighted-parameter-simplex}

Fix $\alpha_1,\ldots,\alpha_N>0$, and define the weighted simplex
\begin{equation}\label{eq:weighted-simplex}
 \Delta_\alpha
 =\left\{t\in[0,\infty)^N:\sum_{i=1}^N\alpha_i t_i=1\right\}.
\end{equation}
Here and below, the interior and boundary of \(\Delta_\alpha\) are
understood relative to its affine hull. For $t\in\Delta_\alpha$, let
\begin{equation}
 J(t)=\{i:t_i>0\}.
\end{equation}
For every $t\in\Delta_\alpha$, define
\begin{equation}\label{eq:boundary-polytope}
	P(t)=\bigcap_{i\in J(t)}
	\{x\in\R^n:\langle x,u_i\rangle\leq t_i^{1/p}\}.
\end{equation}
Set
\begin{equation}\label{eq:parameter-volume-functional}
	W(t)=
	\begin{cases}
		V(P(t)),&\pos\{u_i:i\in J(t)\}=\R^n,\\
		+\infty,&\text{otherwise}.
	\end{cases}
\end{equation}

 This definition has the following geometric interpretation.
It follows from \eqref{eq:positive-span-separation} that \(P(t)\) is
bounded if and only if
\(\pos\{u_i:i\in J(t)\}=\R^n\). If this condition fails, then
\(P(t)\) has nonempty interior and contains an unbounded ray, so its
volume is infinite. On \(\operatorname{int}\Delta_\alpha\), this agrees with the preceding
definition of \(W\).

For \(I\subset E\), set
\[
\Delta_I
=
\left\{
t\in\Delta_\alpha:
t_j=0\text{ for every }j\in E\setminus I
\right\},
\]
with \(\Delta_\varnothing=\varnothing\), and define
\begin{equation}\label{eq:AXS}
	A=\bigcup_{I\in\Ncal_+(U)}\Delta_I,
	\qquad
	X=\Delta_\alpha\setminus A,
	\qquad
	S=\partial\Delta_\alpha\setminus A,
\end{equation}
where \(\Ncal_+(U)\) is defined in
\eqref{eq:non-positive-spanning-complex}.
\(A\) collects the ``bad'' faces of \(\Delta_\alpha\).
A point \(t\in\Delta_\alpha\) belongs to \(A\) if and only if
\(J(t)\in\Ncal_+(U)\). Hence
\begin{equation}\label{X}
X
=
\left\{
t\in\Delta_\alpha:
\pos\{u_i:i\in J(t)\}=\R^n
\right\},
\end{equation}
so \(X\) is exactly the set on which \(W\) is finite. Since
\(\pos U=\R^n\), the full index set \(E\) does not belong to
\(\Ncal_+(U)\); therefore \(A\subset\partial\Delta_\alpha\) and
\(X=\operatorname{int}\Delta_\alpha\cup S\).

\begin{lemma}\label{lem:W-regularity}
	The function \(W:X\to(0,\infty)\) is continuous and is
	\(C^1\) on \(\operatorname{int}\Delta_\alpha\). Moreover, for every
	\(t^0\in S\), there exist a neighborhood \(\mathcal U\) of \(t^0\)
	in \(\operatorname{aff}\Delta_\alpha\) and a function
	\(\widetilde W\in C^1(\mathcal U)\) such that
	\(
	\widetilde W=W\)
	on \(\mathcal U\cap X
	\).
\end{lemma}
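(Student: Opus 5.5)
The plan is to extend $W$ near a boundary point $t^0\in S$ by letting the vanishing coordinates carry very large support numbers. Since $p<0$, the map $s\mapsto s^{1/p}$ sends $s\to0^+$ to $+\infty$, and the constraints indexed by $E\setminus J(t^0)$ then become redundant. For $t^0\in S$, put $J_0=J(t^0)$. By \eqref{X}, $\pos\{u_i:i\in J_0\}=\R^n$.

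First, on a small neighborhood $\mathcal U$ of $t^0$ in $\operatorname{aff}\Delta_\alpha$, the coordinates $t_i$ with $i\in J_0$ stay in a compact subset of $(0,\infty)$. Hence the polytope
\[
Q(t)=\bigcap_{i\in J_0}\{x:\langle x,u_i\rangle\le t_i^{1/p}\}
\]
is bounded. By compactness, $Q(t)\subset RB$ for a uniform $R$, because the $J_0$-directions positively span $\R^n$ and the right-hand sides are uniformly bounded.

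Next, shrink $\mathcal U$ so that $|t_j|<R^{p}$ for every $j\notin J_0$. Define
\[
\widetilde W(t)=\mathcal V_{J_0}\bigl((t_i^{1/p})_{i\in J_0}\bigr),
\]
the volume of $Q(t)$. Here $\mathcal V_{J_0}$ is the function of Lemma~\ref{lem:one-sided-volume-C1} applied to the configuration $\{u_i:i\in J_0\}$, which is legitimate since that configuration positively spans $\R^n$. Then $\widetilde W$ is $C^1$ on $\mathcal U$, by Lemma~\ref{lem:one-sided-volume-C1} and the chain rule. Note that $\widetilde W$ depends only on the coordinates in $J_0$, which are smooth and positive there. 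Moreover, $\widetilde W$ is defined even where other coordinates are negative, since it ignores them.

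For $t\in\mathcal U\cap X$, every $j\in J(t)\setminus J_0$ satisfies $0<t_j<R^p$. Since $p<0$, this gives $t_j^{1/p}>R\ge h_{Q(t)}(u_j)$. Therefore the extra half-spaces contain $Q(t)$, so $P(t)=Q(t)$ and $W(t)=\widetilde W(t)$. We also have $J_0\subset J(t)$, because $t_i>0$ persists nearby for $i\in J_0$. This proves the local extension claim.

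Continuity of $W$ on $X$ then follows at points of $S$ from the local $C^1$ extension. At interior points it follows from Corollary~\ref{cor:inverse-volume-derivative}, which also gives the $C^1$ property there. Positivity holds since $P(t)$ contains the origin in its interior.

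The main obstacle is the uniform radius bound for $Q(t)$. I would obtain it by writing any $x$ with $|x|=1$ as having $\max_{i\in J_0}\langle x,u_i\rangle\ge c>0$, where $c$ is a positive minimum on the sphere guaranteed by positive spanning. This yields $Q(t)\subset (\max_i t_i^{1/p}/c)\,B$.
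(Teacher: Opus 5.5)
Your proposal is correct and follows the paper's proof essentially step by step. Like the paper, you localize to $J_0=J(t^0)$, use the constant $\min_{v\in S^{n-1}}\max_{i\in J_0}\langle v,u_i\rangle>0$ to place the truncated polytope in a fixed ball $RB$, and shrink the neighborhood so that the vanishing coordinates give support numbers exceeding $R$, which makes those constraints redundant. The only differences are cosmetic: you invoke Lemma~\ref{lem:one-sided-volume-C1} with the chain rule where the paper cites Corollary~\ref{cor:inverse-volume-derivative}, and you phrase the shrinking condition as $|t_j|<R^p$.
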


\begin{proof}
	Since
	\(\operatorname{int}\Delta_\alpha\subset(0,\infty)^N\), the
	\(C^1\) assertion in the interior follows from
	Corollary~\ref{cor:inverse-volume-derivative}.
	
	Fix \(t^0\in S\), and set \(J=J(t^0)\). Since \(t^0\in X\), the
	directions \(u_j\), \(j\in J\), positively span \(\R^n\). 
	Since \(t_j^0>0\) for every \(j\in J\), set
	\(
	\eta=\frac12\min_{j\in J}t_j^0>0.
	\)
	Choose a sufficiently small neighborhood \(\mathcal U\) of \(t^0\)
	in \(\operatorname{aff}\Delta_\alpha\) such that
	\begin{equation}\label{low bound}
	t_j\geq\eta
	\qquad
	\text{for every }j\in J\text{ and }t\in\mathcal U.
	\end{equation}
	For \(t\in\mathcal U\), define
	\[
	P_J(t)
	=
	\bigcap_{j\in J}
	\{x\in\R^n:\langle x,u_j\rangle\leq t_j^{1/p}\},
	\qquad
	\widetilde W(t)=V(P_J(t)).
	\]
	Since
	\(
	\pos\{u_j:j\in J\}=\R^n,
	\)
	Corollary~\ref{cor:inverse-volume-derivative}, with the directions
	\(u_j\), \(j\in J\), in place of \(u_1,\ldots,u_N\), shows that
	\[
	(s_j)_{j\in J}
	\longmapsto
	V\left(
	\bigcap_{j\in J}
	\{x:\langle x,u_j\rangle\leq s_j^{1/p}\}
	\right)
	\]
	is \(C^1\) whenever \(s_j>0\) for every \(j\in J\).
	Since \(t_j\geq\eta>0\) on \(\mathcal U\) for \(j\in J\), and
	\(\widetilde W\) depends only on these coordinates, it follows that
	\(\widetilde W\in C^1(\mathcal U)\).

We next show that the polytopes \(P_J(t)\), \(t\in\mathcal U\),
are uniformly bounded. Set
	\[
	c_J
	=
	\min_{v\in S^{n-1}}
	\max_{j\in J}\langle v,u_j\rangle.
	\]
	Since
	\(\pos\{u_j:j\in J\}=\R^n\), 
	\eqref{eq:positive-span-separation} implies that, for every
	\(v\in S^{n-1}\), there exists \(j\in J\) such that
	\(\langle v,u_j\rangle>0\). Hence the continuous function
	\(
	v\longmapsto \max_{j\in J}\langle v,u_j\rangle
	\)
	is strictly positive on  \(S^{n-1}\). Therefore
	\(
	c_J
	>0
	\).
	 If
	\(x\in P_J(t)\) and \(x\neq0\), then
	\[
	c_J|x|
	\leq
	\max_{j\in J}\langle x,u_j\rangle
	\leq
	\max_{j\in J}t_j^{1/p}
	\leq
	\eta^{1/p},
	\]
	where the last inequality uses \(p<0\) and \eqref{low bound}. Hence
	\[
	P_J(t)\subset RB,
	\qquad
	R=\frac{\eta^{1/p}}{c_J},
	\qquad
	t\in\mathcal U.
	\]
	
	For every \(i\notin J\), we have \(t_i^0=0\). Since
	\(t_i^{1/p}\to+\infty\) as \(t_i\to0+\), we may shrink
	\(\mathcal U\) once more so that, whenever
	\(t\in\mathcal U\cap X\) and \(t_i>0\),
	\[
	t_i^{1/p}>R.
	\]
	Hence \(RB\subset\{x:\langle x,u_i\rangle\leq t_i^{1/p}\}\).
	Since \(P_J(t)\subset RB\), every point of \(P_J(t)\) already satisfies
	the constraint \(\langle x,u_i\rangle\leq t_i^{1/p}\) whenever
	\(i\notin J\) and \(t_i>0\). If \(t_i=0\), this constraint is absent by
	the definition of \(P(t)\). For \(t\in\mathcal U\cap X\), by \eqref{low bound} we have \(J\subset J(t)\). Hence, by the definition of
	\(P(t)\),
	\[
	P(t)
	=
	P_J(t)\cap
	\bigcap_{i\in J(t)\setminus J}
	\{x:\langle x,u_i\rangle\leq t_i^{1/p}\}.
	\]
	For every \(i\in J(t)\setminus J\), the preceding estimate shows that
	each \(x\in P_J(t)\) satisfies
	\(\langle x,u_i\rangle\leq t_i^{1/p}\). Therefore
	\[
	P(t)=P_J(t)
	\qquad
	\text{for every }t\in\mathcal U\cap X.
	\]
	
Thus, \(\widetilde W=W\) on \(\mathcal U\cap X\). Hence
\(\widetilde W\) is the required local \(C^1\) extension, and \(W\)
is continuous at \(t^0\) relative to \(X\). Since \(t^0\in S\)
was arbitrary and \(W\) is \(C^1\) on
\(\operatorname{int}\Delta_\alpha\), the equality
\(X=\operatorname{int}\Delta_\alpha\cup S\) shows that \(W\) is
continuous on \(X\).

\end{proof}

\begin{lemma}\label{lem:W-lower-bound}
There is a constant $c_0>0$ such that
\begin{equation}\label{eq:W-lower-bound}
 W(t)\geq c_0
 \qquad\text{for every }t\in X.
\end{equation}
\end{lemma}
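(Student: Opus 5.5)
The natural approach is to show that for every \(t\in X\) the polytope \(P(t)\) contains a fixed ball centered at the origin; its volume then gives the constant \(c_0\). The key observation is that on \(\Delta_\alpha\) every coordinate satisfies \(t_i\leq 1/\alpha_i\), because \(\sum_j\alpha_j t_j=1\) and all terms are nonnegative. Set
\[
r=\min_{1\leq i\leq N}\alpha_i^{-1/p}>0 .
\]
Since \(p<0\), the map \(s\mapsto s^{1/p}\) is decreasing on \((0,\infty)\). Hence for every \(i\in J(t)\) we have \(t_i^{1/p}\geq(1/\alpha_i)^{1/p}=\alpha_i^{-1/p}\geq r\). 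Small positive parameters correspond to large support numbers, so the constraint is never tighter than \(r\).

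Consequently, for every \(t\in\Delta_\alpha\) and every \(i\in J(t)\), the half-space \(\{x:\langle x,u_i\rangle\leq t_i^{1/p}\}\) contains the ball \(rB\): if \(|x|\leq r\), then \(\langle x,u_i\rangle\leq|x|\leq r\leq t_i^{1/p}\), using \(|u_i|=1\). Intersecting over \(i\in J(t)\) in \eqref{eq:boundary-polytope} gives \(rB\subset P(t)\).

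For \(t\in X\), definition \eqref{eq:parameter-volume-functional} gives \(W(t)=V(P(t))\geq V(rB)=r^nV(B)\). So we may take \(c_0=r^nV(B)\), which depends only on \(\alpha\), \(p\) and \(n\). (Off \(X\) we have \(W=+\infty\), so the bound holds trivially there; this is not needed for the statement.)

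There is no real obstacle in this argument. The only point needing care is the direction of the inequality for negative exponents, and that is exactly what makes the lower bound uniform even near the boundary faces in \(S\). There, some \(t_i\to0\), the corresponding constraints become vacuous rather than restrictive, and the inclusion \(rB\subset P(t)\) is unaffected.
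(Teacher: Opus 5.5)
Your proposal is correct and is essentially the paper's proof: both use $t_i\leq\alpha_i^{-1}$ on $\Delta_\alpha$ and the fact that $s\mapsto s^{1/p}$ is decreasing for $p<0$ to get $rB\subset P(t)$ with $r=\min_i\alpha_i^{-1/p}$. This gives the bound with $c_0=V(rB)$.
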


\begin{proof}
The relation \eqref{eq:weighted-simplex} gives $t_i\leq\alpha_i^{-1}$ whenever $t_i>0$. Since $1/p<0$, every finite support number in \eqref{eq:boundary-polytope} is at least $\alpha_i^{-1/p}$. If
\begin{equation}\label{eq:fixed-inner-radius}
 r=\min_{1\leq i\leq N}\alpha_i^{-1/p},
\end{equation}
then $rB\subset P(t)$ for every $t\in X$. Thus, \eqref{eq:W-lower-bound} holds with $c_0=V(rB)$.
\end{proof}

\begin{lemma}\label{lem:W-blowup}
If $t^k\in X$ and $t^k\to t^0\in A$, then
\begin{equation}\label{eq:W-blowup}
 W(t^k)\longrightarrow+\infty.
\end{equation}
\end{lemma}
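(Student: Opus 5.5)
Let $J_0=J(t^0)$. Since $t^0\in A$, the directions $u_j$, $j\in J_0$, do not positively span $\R^n$. By \eqref{eq:positive-span-separation} there is a unit vector $v$ with $\langle v,u_j\rangle\leq0$ for every $j\in J_0$. The plan is to show that $P(t^k)$ contains a long segment in direction $v$ together with a fixed ball, so that its volume must blow up.

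First, I will use Lemma~\ref{lem:W-lower-bound}: with $r$ as in \eqref{eq:fixed-inner-radius}, we have $rB\subset P(t^k)$ for all $k$.

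Second, I will construct the long segment. Fix $\eta=\frac12\min_{j\in J_0}t^0_j>0$ if $J_0\neq\varnothing$. For large $k$, the constraints of $P(t^k)$ split into two groups.
\begin{itemize}
\item For $j\in J_0$, we have $\langle sv,u_j\rangle\leq0<(t^k_j)^{1/p}$ for every $s\geq0$, so these constraints never cut the ray $\{sv:s\ge0\}$.
\item For $i\notin J_0$ with $t^k_i>0$, we have $t^k_i\to0$, hence $(t^k_i)^{1/p}\to+\infty$, since $p<0$.
\end{itemize}
Set $R_k=\min\{(t^k_i)^{1/p}: i\notin J_0,\ t^k_i>0\}$, with $R_k=+\infty$ if this set is empty (that case cannot occur, since $t^k\in X$ makes $P(t^k)$ bounded). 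Then $R_k\to+\infty$. Moreover, $sv\in P(t^k)$ for $0\leq s\leq R_k$, because $\langle sv,u_i\rangle\leq s\leq R_k\leq (t^k_i)^{1/p}$. So $R_kv\in P(t^k)$.

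Third, I will bound the volume from below. By convexity, $P(t^k)\supset\operatorname{conv}(rB\cup\{R_kv\})$. This convex hull contains a cone with apex $R_kv$ over an $(n-1)$-dimensional disc of radius $r$ orthogonal to $v$. Its volume is therefore at least $c_n r^{n-1}R_k$ for a dimensional constant $c_n>0$. Hence $W(t^k)\geq c_nr^{n-1}R_k\to+\infty$.

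The only delicate point is keeping the two groups of indices straight: an index outside $J_0$ may have $t^k_i>0$, so its constraint is present in $P(t^k)$, yet its threshold still tends to infinity. This is the same uniform estimate as in Lemma~\ref{lem:W-regularity}. I do not expect a real obstacle, since the separating direction $v$ is supplied by \eqref{eq:positive-span-separation}.
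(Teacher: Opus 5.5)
Your proof is correct and follows essentially the same route as the paper. You take a separating direction $v$ for $\{u_j : j\in J(t^0)\}$, show that a point $R_kv$ with $R_k\to+\infty$ lies in $P(t^k)$, and bound the volume below by the cone with base $rB\cap v^\perp$ and apex $R_kv$. The only difference is a harmless simplification: you set $R_k=\min\{(t^k_i)^{1/p}: i\notin J(t^0),\ t^k_i>0\}$ and use the crude bound $\langle v,u_i\rangle\le 1$, whereas the paper divides by $\langle v,u_i\rangle$ over the indices where it is positive; the unused $\eta$ in your second step can be dropped.
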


\begin{proof}
	Let \(J=J(t^0)\). Since \(t^0\in A\), the directions
	\(u_j\), \(j\in J\), do not positively span \(\R^n\).
	By \eqref{eq:positive-span-separation}, there exists a unit vector
	\(v\in\R^n\) such that
	\(\langle v,u_j\rangle\leq0\) for every \(j\in J\).
	
	Set
	\(I_+=\{i\in E\setminus J:\langle v,u_i\rangle>0\}\).
	This set is nonempty. Indeed, otherwise
	\(\langle v,u_i\rangle\leq0\) for every \(i\in E\), contradicting
	\(\pos U=\R^n\). Moreover,
	\(I_+\cap J(t^k)\neq\varnothing\) for every \(k\). Otherwise
	\(\langle v,u_i\rangle\leq0\) for every \(i\in J(t^k)\), contrary
	to \(t^k\in X\).
	
	For \(i\notin J\), define
	\begin{equation}\label{eq:vanishing-supports}
		\rho_i^k=
		\begin{cases}
			(t_i^k)^{1/p},&t_i^k>0,\\
			+\infty,&t_i^k=0.
		\end{cases}
	\end{equation}
	Since \(t_i^k\to t_i^0=0\) as \(k\to\infty\) and \(p<0\), we have
	\(\rho_i^k\to+\infty\) as \(k\to\infty\) for every \(i\notin J\). Define
	\[
	R_k
	=
	\min_{i\in I_+}
	\frac{\rho_i^k}{\langle v,u_i\rangle}.
	\]
	Since \(I_+\cap J(t^k)\neq\varnothing\), at least one term in this
	minimum is finite, and hence \(R_k<\infty\). Since \(I_+\) is a
	fixed finite set and every quotient in the minimum tends to
	\(+\infty\), we have \(R_k\to+\infty\).
	
	We claim that \(R_kv\in P(t^k)\). Let
	\(\ell\in J(t^k)\). If \(\langle v,u_\ell\rangle\leq0\), then
	\(\langle R_kv,u_\ell\rangle\leq0<(t_\ell^k)^{1/p}\).
	If \(\langle v,u_\ell\rangle>0\), then
	\(\ell\notin J\), so \(\ell\in I_+\), and the definition of \(R_k\)
	gives
	\(\langle R_kv,u_\ell\rangle\leq
	\rho_\ell^k=(t_\ell^k)^{1/p}\).
	Thus \(R_kv\) satisfies every inequality defining \(P(t^k)\).
	
	The fixed ball \(rB\) from \eqref{eq:fixed-inner-radius} also lies
	in \(P(t^k)\). Since \(P(t^k)\) is convex, it contains the cone
	\[
	C_k
	=
	\operatorname{conv}
	\bigl((rB\cap v^\perp)\cup\{R_kv\}\bigr).
	\]
	Since \(v\) is a unit vector, this cone has height \(R_k\).
	Consequently,
	\[
	W(t^k)
	\geq V(C_k)
	=
	\frac{R_k}{n}\,
	\Haus^{n-1}(rB\cap v^\perp)
	\longrightarrow+\infty.
	\]
	This proves \eqref{eq:W-blowup}.
\end{proof}

\begin{lemma}\label{lem:exit-derivative}
	Let \(t^0\in S\), and let \(i\notin J(t^0)\). For
	\(0\leq s<\alpha_i^{-1}\), define \(t(s)=(t_1(s),\ldots,t_N(s))\in\Delta_\alpha\) by
	\begin{equation}\label{eq:exit-curve}
		t_i(s)=s,
		\qquad
		t_j(s)=(1-\alpha_i s)t_j^0
		\quad(j\neq i).
	\end{equation}
	Then \(t(s)\in X\) for \(0<s<\alpha_i^{-1}\). Moreover, for all
	sufficiently small \(s>0\),
	\begin{equation}\label{eq:exit-scaling}
		W(t(s))
		=
		(1-\alpha_i s)^{n/p}W(t^0),
	\end{equation}
	and therefore
	\begin{equation}\label{eq:exit-derivative}
		\left.\frac{d}{ds}\right|_{s=0+}W(t(s))
		=
		-\frac np\alpha_iW(t^0)>0.
	\end{equation}
\end{lemma}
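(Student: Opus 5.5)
The argument has three parts: membership in \(X\), the exact scaling identity \eqref{eq:exit-scaling}, and then the derivative.

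\textbf{Membership.} First check that \(t(s)\in\Delta_\alpha\). All coordinates are nonnegative for \(0\le s<\alpha_i^{-1}\). Moreover
\[
\sum_j\alpha_jt_j(s)=\alpha_is+(1-\alpha_is)\sum_{j\ne i}\alpha_jt_j^0=\alpha_is+(1-\alpha_is)=1,
\]
using \(t_i^0=0\). For \(0<s<\alpha_i^{-1}\) we have \(1-\alpha_is>0\), so \(J(t(s))=J(t^0)\cup\{i\}\supset J(t^0)\). Since \(t^0\in X\), the directions indexed by \(J(t^0)\) already positively span \(\R^n\), and so do those indexed by the larger set. By \eqref{X}, \(t(s)\in X\).

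\textbf{Scaling identity.} Set \(J=J(t^0)\) and compare \(P(t(s))\) with \(P(t^0)\). For \(j\in J\) the support numbers become
\[
\bigl((1-\alpha_is)t_j^0\bigr)^{1/p}=(1-\alpha_is)^{1/p}(t_j^0)^{1/p}.
\]
Hence the intersection over \(J\) equals \(\lambda_sP(t^0)\) with \(\lambda_s=(1-\alpha_is)^{1/p}\). Note that \(\lambda_s\ge1\), and \(\lambda_s\to1\) as \(s\to0+\).

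The only extra constraint is \(\langle x,u_i\rangle\le s^{1/p}\). The point is to show it is inactive for small \(s\). Since \(P(t^0)\) is bounded, \(P(t^0)\subset R_0B\) for some \(R_0\). For \(s\) small, \(\lambda_s\le2\), so \(\lambda_sP(t^0)\subset2R_0B\). Because \(p<0\), we have \(s^{1/p}\to+\infty\), so for \(s\) small \(s^{1/p}>2R_0\), and then \(2R_0B\) lies in the half-space \(\{\langle x,u_i\rangle\le s^{1/p}\}\). (This is the same truncation argument used in Lemma~\ref{lem:W-regularity}; alternatively one may invoke the local extension \(\widetilde W\) from that lemma directly.) Therefore \(P(t(s))=\lambda_sP(t^0)\), and taking volumes gives \(W(t(s))=\lambda_s^nW(t^0)=(1-\alpha_is)^{n/p}W(t^0)\), which is \eqref{eq:exit-scaling}.

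\textbf{Derivative.} Differentiate \((1-\alpha_is)^{n/p}\) at \(s=0\). This gives
\[
\frac{d}{ds}\Big|_{s=0}(1-\alpha_is)^{n/p}=-\frac np\,\alpha_i,
\]
so the right derivative equals \(-\frac np\alpha_iW(t^0)\). This is positive because \(p<0\), \(\alpha_i>0\), and \(W(t^0)>0\) by Lemma~\ref{lem:W-lower-bound}. That is \eqref{eq:exit-derivative}.

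The only mildly delicate step is the inactivity of the new \(i\)-th constraint. It rests on uniform boundedness of \(\lambda_sP(t^0)\) together with the blow-up \(s^{1/p}\to\infty\); everything else is direct computation.
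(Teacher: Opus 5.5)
Your proof is correct and follows the paper's own argument. You verify \(t(s)\in\Delta_\alpha\) and \(J(t(s))=J(t^0)\cup\{i\}\), write \(P(t(s))=\lambda_sP(t^0)\cap\{\langle x,u_i\rangle\le s^{1/p}\}\) with \(\lambda_s=(1-\alpha_is)^{1/p}\), and use boundedness of \(P(t^0)\) with \(s^{1/p}\to+\infty\) to show the new constraint is inactive; the only cosmetic difference is that you bound \(\lambda_s\le2\) rather than comparing \(s^{1/p}\) with \(\lambda_sR_0\) directly.
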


\begin{proof}
	Set \(J=J(t^0)\). Since \(i\notin J\), we have \(t_i^0=0\), and
	hence \(\sum_{j\neq i}\alpha_jt_j^0=1\). It follows directly from
	\eqref{eq:exit-curve} that
	\(\sum_j\alpha_jt_j(s)=1\). Thus \(t(s)\in\Delta_\alpha\) whenever
	\(0\leq s<\alpha_i^{-1}\).
	
	For \(0<s<\alpha_i^{-1}\), we have
	\(J(t(s))=J\cup\{i\}\). Since \(t^0\in S\subset X\), the directions
	\(u_j\), \(j\in J\), positively span \(\R^n\). The directions
	indexed by \(J(t(s))\) therefore also positively span \(\R^n\), so
	\(t(s)\in X\) by \eqref{X}.
	
	Set \(\lambda_s=(1-\alpha_i s)^{1/p}\). For every \(j\in J\),
	\(
	t_j(s)^{1/p}
	=
	\lambda_s(t_j^0)^{1/p}.
	\)
	Consequently, the definition of \(P(t(s))\) gives
	\[
	P(t(s))
	=
	\lambda_sP(t^0)
	\cap
	\{x\in\R^n:\langle x,u_i\rangle\leq s^{1/p}\}.
	\]
	
	The polytope \(P(t^0)\) is bounded, so choose \(R_0>0\) such that
	\(P(t^0)\subset R_0B\). Since \(p<0\), we have
	\(s^{1/p}\to+\infty\) as \(s\to0+\), whereas
	\(\lambda_s\to1\). Hence, for all sufficiently small \(s>0\),
	\(s^{1/p}>\lambda_sR_0\). For every
	\(x\in\lambda_sP(t^0)\), it follows that
	\(\langle x,u_i\rangle\leq|x|\leq\lambda_sR_0<s^{1/p}\).
	Thus
	\[
	P(t(s))=\lambda_sP(t^0)
	\]
	for all sufficiently small \(s>0\).
	
	Taking volumes yields \eqref{eq:exit-scaling}. Differentiating
	\eqref{eq:exit-scaling} at \(s=0+\) gives
	\eqref{eq:exit-derivative}; the final inequality follows from
	\(p<0\), \(\alpha_i>0\), and \(W(t^0)>0\).
\end{proof}

\section{Topology and the general existence theorem}\label{sec:topology}
All interiors and boundaries of simplices are understood relative to
their affine hulls. The following lemma shows that nontrivial relative homology, together with suitable boundary behavior, forces the existence of an interior critical point.

\begin{lemma}\label{lem:relative-critical-point}
	Let \(\Delta\) be a simplex, let
	\(A\subset\partial\Delta\) be a union of faces of \(\Delta\), and set
	\[
	X=\Delta\setminus A,
	\qquad
	S=\partial\Delta\setminus A.
	\]
	Let \(L=\operatorname{aff}\Delta\), and let \(T=L-L\) be its
	translation space. For each facet \(G\) of \(\Delta\), fix an
	affine function \(\ell_G\) on \(L\) such that
	\[
	G=\{x\in\Delta:\ell_G(x)=0\},
	\qquad
	\Delta\subset\{x\in L:\ell_G(x)\geq0\}.
	\]
	A vector \(Y\in T\) is said to point strictly outside \(G\) if
	\[
	D\ell_G[Y]
	=
	\left.\frac{d}{dt}\right|_{t=0}\ell_G(x+tY)<0,
	\]
	where \(x\in L\); the value is independent of \(x\) since
	\(\ell_G\) is affine.

	Let \(F:X\to\R\) be continuous and bounded below. Assume that:
	\begin{enumerate}[label=\textup{(\roman*)},leftmargin=2.4em]
		\item
		\(F(x_j)\to+\infty\) whenever \(x_j\in X\) and
		\(x_j\to a\in A\);
		
		\item
		\(F\) is \(C^1\) on \(\operatorname{int}\Delta\), and for
		every \(x\in S\), there are an open neighborhood \(U_x\) of
		\(x\) in \(L\) and a function
		\(\widetilde F_x\in C^1(U_x)\) such that
		\(\widetilde F_x=F\) on \(U_x\cap X\).
		For \(x\in S\), define
		\(
		DF(x)=D\widetilde F_x(x)
		\).
		This definition is independent of the chosen local extension;
		
		\item
		for every \(x\in S\), there is a vector \(Y_x\in T\) that
		points strictly outside every facet of \(\Delta\) containing
		\(x\) and satisfies
		\(
		DF(x)[Y_x]<0
		\).
	\end{enumerate}
	A point \(x\in\operatorname{int}\Delta\) is called a critical
	point of \(F\) if \(DF(x)=0\) as a linear functional on \(T\).
	If
	\[
	H_q(X,S;\mathbb Z_2)\neq0
	\]
	for some \(q\geq0\), then \(F\) has a critical point in
	\(\operatorname{int}\Delta\).
\end{lemma}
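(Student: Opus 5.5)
The plan is to argue by contradiction. Suppose $F$ has no critical point in $\operatorname{int}\Delta$. I will construct a descending flow that pushes every compact piece of $X$ into $S$. Then every class in $H_q(X,S;\mathbb Z_2)$ dies, contradicting the hypothesis.

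\emph{Step 1: compact sublevel sets.} For $c\in\R$ set $X^c=\{x\in X:F(x)\le c\}$. By (i) and continuity of $F$ on $X$, $X^c$ is closed in $\Delta$ and disjoint from $A$, so it is compact. A relative cycle representing a class in $H_q(X,S;\mathbb Z_2)$ has compact support $K\subset X$, and $F$ is bounded on $K$. Hence $K\subset X^c$ for some $c$, and the class lies in the image of $H_q(X^c\cup S',S';\mathbb Z_2)$ with $S'=S\cap X^c$. It therefore suffices to show that the inclusion of pairs $(X^c,X^c\cap S)\to(X,S)$ is homotopic, as a map of pairs, to a map into $(S,S)$. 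This forces the induced map on relative homology to be zero, so every class vanishes, a contradiction.

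\emph{Step 2: a pseudo-gradient field.} For $x\in\operatorname{int}\Delta$, take $V_x=-\nabla F(x)\neq0$ (gradient with respect to any inner product on $T$). For $x\in S$, take $V_x=Y_x$ from (iii). By continuity of $D\widetilde F_x$, each $V_x$ satisfies $DF(y)[V_x]<0$ for all $y\in X$ in a small open neighborhood $N_x$ of $x$. For $x\in S$, shrink $N_x$ so that it meets only the facets of $\Delta$ containing $x$; on these facets $Y_x$ points strictly outside. Cover a neighborhood of $X^c$ by finitely many $N_x$, using compactness, and glue the constant fields $V_x$ by a smooth partition of unity to get a Lipschitz field $V$. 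Strict inequalities are preserved under convex combinations. Consequently $DF[V]\le-\delta<0$ on $X^c$ for some $\delta>0$, and at every point of $X^c\cap\partial\Delta$, $V$ points strictly outside each facet containing that point. The field is defined on an open set of $L$ containing $X^c$, because near $S$ we may use the extensions $\widetilde F_x$.

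\emph{Step 3: deformation to $S$.} Let $\phi_s$ be the flow of $V$. Along trajectories starting in $X^c$, $F$ decreases at rate at least $\delta$ as long as the trajectory stays in $\Delta$. Since $F$ is bounded below, every trajectory leaves $\Delta$ in finite time, at most $(c-\inf F)/\delta$. It cannot approach $A$, since $F$ stays $\le c$ and (i) makes $F$ blow up near $A$. So the trajectory exits through a point of $S$. Let $\tau(x)$ be the first exit time, with $\tau=0$ on $X^c\cap S$. The homotopy is $H(x,s)=\phi_{s\tau(x)}(x)$, which stays in $X^c$ and maps $X^c\cap S$ into $S$. At time $1$ it lands in $S$.

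\emph{Main obstacle.} The key technical point is the continuity of $\tau$. The argument uses that $V$ is strictly outward at every facet through each exit point. In local coordinates $\ell_G$, this gives $\frac{d}{ds}\ell_G(\phi_s(x))<0$ at the exit point for every facet $G$ with $\ell_G=0$ there. This transversality, via the implicit function theorem applied to $\min_G \ell_G$, shows that the exit time depends continuously on the starting point. It also shows that nearby trajectories cannot graze $\partial\Delta$ and re-enter. I would verify this first, treating points on lower-dimensional faces by taking the minimum over the finitely many relevant $\ell_G$.
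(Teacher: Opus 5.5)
Your proposal is correct and follows essentially the paper's route: a partition-of-unity pseudo-gradient field built from descent vectors in the interior and the outward vectors $Y_x$ on $S$, its flow, the first exit time $\tau$, and the deformation $H(x,s)=\varphi(x,s\tau(x))$. As in the paper, continuity of $\tau$ comes from continuous dependence (lower bound) and strict outward transversality at the exit point (upper bound). The paper makes the latter explicit near $S$ via $\tau(x)\le\ell_G(x)/c$, which is what your minimum-over-$\ell_G$ sketch amounts to.

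The only difference is that you localize to compact sublevel sets $X^c$ via compact supports of relative cycles and get a uniform rate $\delta$ by compactness. The paper instead normalizes by the constants $c_\nu$ in a locally finite partition of unity to get $DF[Y]\le-1$ on all of $\operatorname{int}\Delta$, and retracts all of $X$ onto $S$. In either version, take the neighborhoods of interior points inside $\operatorname{int}\Delta$, as the paper does, so that only the vectors $Y_x$ contribute at boundary points.
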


\begin{proof}
	If \(\dim\Delta=0\), then \(T=\{0\}\), and the conclusion is
	immediate. Hence assume that \(\dim\Delta\geq1\).

	Suppose that \(F\) has no critical point in
	\(\operatorname{int}\Delta\). For each
	\(x\in\operatorname{int}\Delta\), choose \(V_x\in T\) such that
	\(DF(x)[V_x]<0\). By continuity, there are an open neighborhood
	\(U_x\) of \(x\), with
	\(\overline{U_x}\subset\operatorname{int}\Delta\), and a constant
	\(c_x>0\) such that
	\[
	DF(y)[V_x]\leq-c_x
	\qquad
	\text{for every }y\in U_x.
	\]
	
	For \(x\in S\), choose a local extension
	\(\widetilde F_x\) as in \textup{(ii)}, and set
	\(V_x=Y_x\), where \(Y_x\) is given by \textup{(iii)}. After shrinking \(U_x\) in \textup{(ii)},
	there is a constant \(c_x>0\) such that
	\[
	D\widetilde F_x(y)[V_x]\leq-c_x
	\qquad
	\text{for every }y\in U_x.
	\]
	On \(U_x\cap\operatorname{int}\Delta\), this becomes
	\(DF(y)[V_x]\leq-c_x\).
	
	Since \(\Delta\) has finitely many facets and every facet not
	containing \(x\) has positive distance from \(x\), we may shrink
	\(U_x\) further so that every facet meeting \(U_x\) contains
	\(x\). By \textup{(iii)}, \(V_x\) therefore points strictly outside every
	facet meeting \(U_x\).
	
Thus, for every \(x\in X=\operatorname{int}\Delta\cup S\), we have
chosen an open neighborhood \(U_x\) of \(x\). Let
	\(\Omega=\bigcup_{x\in X}U_x\), which is an open neighborhood of
	\(X\) in \(L\). Choose a locally finite refinement
	\(\{U_\nu\}\) of the cover \(\{U_x\}_{x\in X}\), and a smooth
	partition of unity \(\{\psi_\nu\}\) on \(\Omega\) such that
	\(\operatorname{supp}\psi_\nu\subset U_\nu\) for every \(\nu\). For each \(\nu\), choose \(x_\nu\in X\) such that
	\(U_\nu\subset U_{x_\nu}\), and set
	\(V_\nu:=V_{x_\nu}\) and \(c_\nu:=c_{x_\nu}\).
	Define
	\[
	Y(y)=\sum_\nu\psi_\nu(y)\frac{V_\nu}{c_\nu},
	\qquad y\in\Omega.
	\]
	The sum is locally finite, so \(Y\) is a smooth vector field on
	\(\Omega\). For every \(y\in\operatorname{int}\Delta\), the linearity
	of \(DF(y)\) in the direction variable gives
	\begin{equation}\label{eq:unit-descent}
		DF(y)[Y(y)]
		=
		\sum_\nu\psi_\nu(y)\frac{DF(y)[V_\nu]}{c_\nu}
		\leq
		-\sum_\nu\psi_\nu(y)
		=-1.
	\end{equation}

	Let \(y\in S\), and let \(G\) be a facet containing \(y\).
	An interior neighborhood has closure contained in
	\(\operatorname{int}\Delta\), so it cannot contribute to \(Y(y)\).
If \(\psi_\nu(y)>0\), then
\(y\in U_\nu\subset U_{x_\nu}\). Since \(y\in G\), the facet \(G\)
meets \(U_{x_\nu}\). By the choice of \(U_{x_\nu}\), this implies
\(x_\nu\in G\), and hence \textup{(iii)} gives
\(D\ell_G[V_\nu]<0\). Hence
	\begin{equation}\label{DLG}
	D\ell_G[Y(y)]
	=
	\sum_\nu
	\frac{\psi_\nu(y)}{c_\nu}
	D\ell_G[V_\nu]
	<0.
	\end{equation}
	Thus \(Y(y)\) points strictly outside every facet containing \(y\).
	
	Let \(\varphi\) be the maximal local flow on \(\Omega\) generated by
	\(Y\); thus \(\varphi(x,0)=x\), and
	\(\partial_t\varphi(x,t)=Y(\varphi(x,t))\) whenever the flow is
	defined. Since \(F\) is bounded below, set
	\[
	m=\inf_X F>-\infty.
	\]
	Fix \(x\in\operatorname{int}\Delta\), and define
	\[
	\tau_*(x)
	=
	\sup\bigl\{
	t>0:
	\varphi(x,s)\ \text{is defined and belongs to }
	\operatorname{int}\Delta
	\text{ for every }0\leq s<t
	\bigr\}.
	\]
	
	Consider the sublevel set
	\[
	K_x=\{y\in X:F(y)\leq F(x)\}.
	\]
	We first note that \(K_x\) is compact in \(L\). Indeed, let
	\(y_j\in K_x\). Since \(\Delta\) is compact, after passing to a
	subsequence we may assume that \(y_j\to y\in\Delta\). The limit
	cannot belong to \(A\), because condition \textup{(i)} would then
	give \(F(y_j)\to+\infty\), whereas \(F(y_j)\leq F(x)\). Hence
	\(y\in X\), and the continuity of \(F\) gives
	\(F(y)\leq F(x)\). Thus \(y\in K_x\).
	
	For \(0\leq t<\tau_*(x)\), the chain rule and
	\eqref{eq:unit-descent} give
	\[
	\frac{d}{dt}F(\varphi(x,t))
	=
	DF(\varphi(x,t))
	\left[
	\frac{\partial}{\partial t}\varphi(x,t)
	\right]=
	DF(\varphi(x,t))[Y(\varphi(x,t))]
	\leq-1.
	\]
	Consequently,
	\[
m\le	F(\varphi(x,t))\leq F(x)-t.
	\]
	In particular,
	\[
	t\leq F(x)-m,
	\]
	and therefore
	\[
	\tau_*(x)\leq F(x)-m<\infty.
	\]
	Moreover, \(\varphi(x,t)\in K_x\) for every
	\(0\leq t<\tau_*(x)\).
	
	Since 
	\(K_x\) is a compact subset of \(\Omega\), the standard continuation
	theorem for ordinary differential equations implies that the solution
	\(t\mapsto\varphi(x,t)\) extends beyond \(t=\tau_*(x)\). In particular,
	\(\varphi(x,\tau_*(x))\) is well defined.  The continuity of the flow and the closedness
	of \(\Delta\) give
	\(\varphi(x,\tau_*(x))\in\Delta\).
	If this point belonged to \(\operatorname{int}\Delta\), then, since
	the solution extends beyond \(t=\tau_*(x)\) and
	\(\operatorname{int}\Delta\) is open, it would remain in
	\(\operatorname{int}\Delta\) for a short time after \(\tau_*(x)\).
	This contradicts the definition of \(\tau_*(x)\). Hence
	\[
	\varphi(x,\tau_*(x))\in\partial\Delta.
	\]

	Since \(\varphi(x,t)\in K_x\) for \(t<\tau_*(x)\) and \(K_x\) is
	closed, we also have
	\(
	\varphi(x,\tau_*(x))\in K_x\subset X
	\).
	It follows that
	\begin{equation}\label{taus}
	\varphi(x,\tau_*(x))\in
	\partial\Delta\cap X=S.
	\end{equation}
	If \(S=\varnothing\), this is already a contradiction.

Assume now that \(S\neq\varnothing\). For
\(x\in\operatorname{int}\Delta\), set
\(\tau(x)=\tau_*(x)\), which is the first time at which
\(\varphi(x,t)\) reaches \(\partial\Delta\), and set
\(\tau(x)=0\) for \(x\in S\). We prove that \(\tau\) is continuous.

Fix \(x_0\in\operatorname{int}\Delta\), and write
\(\tau_0=\tau(x_0)\). If \(0<\varepsilon<\tau_0\), then the set
\(\varphi(x_0,[0,\tau_0-\varepsilon])\) is a compact subset of
\(\operatorname{int}\Delta\), and hence has positive distance from
\(\partial\Delta\). By continuous dependence of the flow on the initial point
\cite[Theorem~6.1]{TeschlODE}, if \(x\) is sufficiently close to
\(x_0\), then \(\varphi(x,t)\in\operatorname{int}\Delta\) for
\(0\leq t\leq\tau_0-\varepsilon\). Hence
\[
\tau(x)\geq\tau_0-\varepsilon.
\]

	Set \(y_0=\varphi(x_0,\tau_0)\), and choose a facet \(G\)
	containing \(y_0\). Since \(y_0\in S\subset\Omega\), the flow is defined for a short
	time after \(\tau_0\). Since \(y_0\in G\), we have \(\ell_G(y_0)=0\).
	The flow equation
	\(\partial_t\varphi(x_0,t)=Y(\varphi(x_0,t))\)
	and the chain rule give
	\[
	\left.\frac{d}{dt}\ell_G(\varphi(x_0,t))\right|_{t=\tau_0}
	=
	D\ell_G[Y(y_0)]<0
	\]
	by \eqref{DLG}. Hence
	\[
	\ell_G(\varphi(x_0,\tau_0+\delta))<0
	\]
	for all sufficiently small \(\delta>0\).  Choose such a
	\(\delta\in(0,\varepsilon)\). By the continuous dependence of the flow on the initial
	point, if \(x\) is sufficiently close
	to \(x_0\), then \(\varphi(x,t)\) is defined for
	\(0\leq t\leq\tau_0+\delta\), and
	\[
	\ell_G(\varphi(x,\tau_0+\delta))<0.
	\]
Since \(\Delta\subset\{\ell_G\geq0\}\), the curve
\(t\mapsto\varphi(x,t)\) must meet \(\partial\Delta\) no later than
time \(\tau_0+\delta\). Hence
\[
\tau(x)\leq\tau_0+\delta<\tau_0+\varepsilon,
\]
for \(x\) sufficiently close to \(x_0\). Thus \(\tau\) is
continuous at every point of \(\operatorname{int}\Delta\).
	
	Finally, fix \(x_0\in S\), and choose a facet \(G\) containing
	\(x_0\). Since \(D\ell_G[Y(x_0)]<0\) by \eqref{DLG} and \(Y\) is smooth on
	\(\Omega\), there are bounded open
	neighborhoods \(U_0\) and \(U\) of \(x_0\), with
	\(\overline{U_0}\subset U\) and
	\(\overline U\subset\Omega\), and constants \(c,M>0\) such that
	\begin{equation}\label{bound}
	D\ell_G[Y(z)]\leq-c
	\qquad\text{and}\qquad
	|Y(z)|\leq M
	\quad\text{on }U.
	\end{equation}
	Set
	\[
	d=\operatorname{dist}(\overline{U_0},L\setminus U)>0.
	\]
	Let \(x\in X\cap U_0\) be sufficiently close to \(x_0\). If
	\(x\in S\), then \(\tau(x)=0\). Suppose that
	\(x\in\operatorname{int}\Delta\), and set
	\(T_x=\ell_G(x)/c\). Since \(\ell_G(x)\to\ell_G(x_0)=0\) as
	\(x\to x_0\), we may assume that \(MT_x<d\).
	
We claim that \(\tau(x)\leq T_x\). Suppose, to the contrary,
that \(\tau(x)>T_x\). Then
\(\varphi(x,t)\in\operatorname{int}\Delta\) for every
\(0\leq t\leq T_x\).
If the trajectory left \(U\) during this interval, let
\(\sigma\leq T_x\) be its first exit time from \(U\).
Since \(\partial_t\varphi(x,t)=Y(\varphi(x,t))\) and
\(\varphi(x,0)=x\), we would have
\[
d
\leq
|\varphi(x,\sigma)-x|
\leq
\int_0^\sigma |Y(\varphi(x,t))|\,dt
\leq
M\sigma
\leq
MT_x
<d,
\]
a contradiction.  Hence \(\varphi(x,t)\in U\) for every \(0\leq t\leq T_x\).
By \eqref{bound}, the chain rule gives
\(\frac{d}{dt}\ell_G(\varphi(x,t))
=D\ell_G[Y(\varphi(x,t))]\leq-c\) on \([0,T_x]\).
Integrating and using \(T_x=\ell_G(x)/c\), we obtain
\[
\ell_G(\varphi(x,T_x))
\leq
\ell_G(x)-cT_x
=0.
\]
	This contradicts \(\tau(x)>T_x\), because every interior point of
	\(\Delta\) satisfies \(\ell_G>0\). Therefore
	\[
	0\leq\tau(x)\leq\frac{\ell_G(x)}{c}\longrightarrow0
	\qquad
	\text{as }x\to x_0.
	\]
	Thus \(\tau\) is continuous on \(X\).
	
	Define
	\[
	H(x,s)=\varphi(x,s\tau(x)),
	\qquad
	x\in X,\quad 0\leq s\leq1.
	\]
	If \(x\in\operatorname{int}\Delta\), then
	\(H(x,s)\in\operatorname{int}\Delta\) for \(s<1\), while
	\(H(x,1)\in S\) by \eqref{taus}. If \(x\in S\), then \(\tau(x)=0\), so
	\(H(x,s)=x\) for every \(s\). Since \(\tau\) is continuous and
	\((x,t)\mapsto\varphi(x,t)\) is continuous on the domain of the
	local flow, \(H\) is continuous. Hence \(H\) is a strong
	deformation retraction of \(X\) onto \(S\).
	
	It follows that
	\[
	H_q(X,S;\mathbb Z_2)=0
	\qquad
	\text{for every }q\geq0,
	\]
	contrary to the hypothesis.
\end{proof}

\begin{lemma}\label{lem:relative-homology-general}
Let $X$ and $S$ be defined by \eqref{eq:AXS}. If
\begin{equation}\label{eq:deletion-homology-degree}
 \widetilde H_k(\Dcal_+(U);\mathbb Z_2)\neq0
\end{equation}
for some $k\geq-1$, then
\begin{equation}
 H_{k+1}(X,S;\mathbb Z_2)\neq0.
\end{equation}
\end{lemma}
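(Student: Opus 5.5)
The plan is to compute \(H_*(X,S;\mathbb Z_2)\) from the simplicial structure of \(\Delta_\alpha\) and then apply combinatorial Alexander duality \eqref{eq:Alexander-nonvanishing}. Since \(X=\Delta_\alpha\setminus A\) and \(S=\partial\Delta_\alpha\setminus A\), where \(A\) is the subcomplex \(\lvert\Ncal_+(U)\rvert\) realized inside \(\partial\Delta_\alpha\) (a simplex on \(N\) vertices), we have \((X,S)=(\Delta_\alpha\setminus A,\partial\Delta_\alpha\setminus A)\). The space \(X\) is contractible. Indeed, \(\operatorname{int}\Delta_\alpha\neq\varnothing\), and \(X\) is star-shaped with respect to the barycenter \(b\), because for \(t\in X\) and \(0<\lambda\le 1\) the point \((1-\lambda)t+\lambda b\) lies in the interior, hence not in \(A\). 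The long exact sequence of the pair then gives
\[
H_{k+1}(X,S;\mathbb Z_2)\cong\widetilde H_k(S;\mathbb Z_2),
\]
where for \(k=-1\) the case \(S=\varnothing\) yields \(H_0(X,\varnothing)=\mathbb Z_2=\widetilde H_{-1}(\varnothing)\) under the convention \eqref{eq:minus-one-homology}. It therefore suffices to show \(\widetilde H_k(S;\mathbb Z_2)\neq0\).

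Next I identify \(S\) up to homotopy. The set \(S=\lvert\partial\Delta_E\rvert\setminus\lvert\Ncal_+(U)\rvert\) is exactly the complement model \eqref{eq:Alexander-complement-model} with \(\mathcal A=\Ncal_+(U)\). Since \(\pos U=\R^n\), the set \(E\) is not in \(\Ncal_+(U)\), so \(\Ncal_+(U)\subset\partial\Delta_E\) is a legitimate subcomplex. It is a simplicial complex, since a subset of a non-positively-spanning set is non-positively-spanning. Hence \(S\simeq\lvert\Ncal_+(U)^*\rvert\). As noted in the proof of Lemma~\ref{lem:no-essential-subspaces}, \(\Ncal_+(U)^*=\Dcal_+(U)\): indeed, \(J\in\Ncal_+(U)^*\) iff \(E\setminus J\notin\Ncal_+(U)\) iff \(\pos\{u_i:i\in E\setminus J\}=\R^n\). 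Consequently \(\widetilde H_k(S;\mathbb Z_2)\cong\widetilde H_k(\Dcal_+(U);\mathbb Z_2)\neq0\) by \eqref{eq:deletion-homology-degree}, which finishes the argument.

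The main obstacle is matching conventions at the edge cases. The homotopy equivalence \eqref{eq:Alexander-complement-model} must be read so that an empty complement corresponds to the void complex, and reduced homology in degree \(-1\) must be used consistently. Concretely:
\begin{itemize}
\item If \(\Dcal_+(U)=\{\varnothing\}\), i.e.\ no single index can be deleted, then every vertex of \(\Delta_E\) lies in \(\Ncal_+(U)\). In that case I will check directly that \(\Ncal_+(U)=\partial\Delta_E\), hence \(S=\varnothing\), and the \(k=-1\) case holds as above.
\item Otherwise \(S\neq\varnothing\) and ordinary reduced homology applies.
\end{itemize}
One should also make sure that the affine identification of \(\Delta_\alpha\) with the standard simplex \(\lvert\Delta_E\rvert\) (vertex \(i\) mapped to \(\alpha_i^{-1}e_i\)) carries each \(\Delta_I\) to the geometric face of \(I\). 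This is what lets the complement model apply verbatim.
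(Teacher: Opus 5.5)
Your proposal is correct and follows the paper's proof essentially step for step. Both arguments contract \(X\) by the straight-line homotopy to an interior point, identify \(\Delta_\alpha\cong\lvert\Delta_E\rvert\) affinely so that \(S\) becomes the complement \(\lvert\partial\Delta_E\rvert\setminus\lvert\Ncal_+(U)\rvert\simeq\lvert\Dcal_+(U)\rvert\), apply the long exact sequence of the pair, and treat \(k=-1\) (where \(S=\varnothing\)) separately. One small slip: \(\Dcal_+(U)=\{\varnothing\}\) means every facet \(E\setminus\{i\}\), not every vertex, lies in \(\Ncal_+(U)\); downward closure then gives \(\Ncal_+(U)=\partial\Delta_E\).
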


\begin{proof}
	Choose \(b\in\operatorname{int}\Delta_\alpha\). For \(t\in X\) and \(s\in[0,1]\), the homotopy
	\begin{equation}\label{eq:X-contraction}
		H(t,s)=(1-s)t+sb
	\end{equation}
	contracts \(X\) to \(b\). Indeed, \(H(t,0)=t\) and \(H(t,1)=b\).
	Moreover, for \(s>0\), every coordinate of \(H(t,s)\) is positive.
	Since \(H(t,s)\in\Delta_\alpha\) and \(\pos U=\R^n\), it follows that
	\[
	H(t,s)\in\operatorname{int}\Delta_\alpha\subset X.
	\]
	Hence \(X\) is contractible.
	
		Let
	\[
	|\Delta_E|
	=
	\left\{
	\lambda=(\lambda_1,\ldots,\lambda_N)\in[0,\infty)^N:
	\sum_{i=1}^N\lambda_i=1
	\right\}
	\]
	be the standard simplex whose vertices are indexed by \(E\).
	The affine homeomorphism
	\[
	\Phi:\Delta_\alpha\longrightarrow\lvert\Delta_E\rvert,
	\qquad
	\Phi(t_1,\ldots,t_N)
	=
	(\alpha_1t_1,\ldots,\alpha_Nt_N),
	\]
		maps each face of \(\Delta_\alpha\) indexed by \(I\subset E\) onto
	the corresponding face of \(\lvert\Delta_E\rvert\). Since, by \eqref{eq:AXS},
	\[
	A=\bigcup_{I\in\Ncal_+(U)}\Delta_I,
	\qquad
	S=\partial\Delta_\alpha\setminus A,
	\]
 we have
	\[
	\begin{aligned}
		\Phi(A)
		&=
		\Phi\left(
		\bigcup_{I\in\Ncal_+(U)}\Delta_I
		\right)\\
		&=
		\bigcup_{I\in\Ncal_+(U)}\Phi(\Delta_I)\\
		&=
		\lvert\Ncal_+(U)\rvert.
	\end{aligned}
	\]
Recall that \(\lvert\Ncal_+(U)\rvert\) denotes the geometric
realization of \(\Ncal_+(U)\), namely the union of the faces indexed
by \(I\in\Ncal_+(U)\).	And
	\[
	\Phi(S)
	=
	\Phi(\partial\Delta_\alpha)\setminus\Phi(A)
	=
	\lvert\partial\Delta_E\rvert
	\setminus
	\lvert\Ncal_+(U)\rvert.
	\]

	 Since \(\Phi|_S\) is a homeomorphism from \(S\) onto
	\(\lvert\partial\Delta_E\rvert
	\setminus\lvert\Ncal_+(U)\rvert\), applying
	\eqref{eq:Alexander-complement-model} with
	\(\mathcal A=\Ncal_+(U)\), and then using
	\(\Ncal_+(U)^*=\Dcal_+(U)\), gives
	\[
	S
	\simeq
	\lvert\Ncal_+(U)^*\rvert
	=
	\lvert\Dcal_+(U)\rvert.
	\]
	
If \(k\geq0\), the reduced long exact sequence of the pair
\((X,S)\) contains
\[
\widetilde H_{k+1}(X;\mathbb Z_2)
\longrightarrow
H_{k+1}(X,S;\mathbb Z_2)
\longrightarrow
\widetilde H_k(S;\mathbb Z_2)
\longrightarrow
\widetilde H_k(X;\mathbb Z_2).
\]
Since \(X\) is contractible, its reduced homology groups vanish.
Hence
\[
H_{k+1}(X,S;\mathbb Z_2)
\cong
\widetilde H_k(S;\mathbb Z_2)\cong
\widetilde H_k(\Dcal_+(U);\mathbb Z_2).
\]
Therefore \eqref{eq:deletion-homology-degree} implies
\(H_{k+1}(X,S;\mathbb Z_2)\neq0\).

	If \(k=-1\), then \eqref{eq:deletion-homology-degree} and
	\eqref{eq:minus-one-homology} imply
	\(\Dcal_+(U)=\{\varnothing\}\). By the definition of
	\(\Dcal_+(U)\), every proper subset of \(E\) then belongs to
	\(\Ncal_+(U)\). Hence
	\(A=\partial\Delta_\alpha\) and \(S=\varnothing\). Since \(X\) is
	nonempty and contractible,
	\[
	H_0(X,S;\mathbb Z_2)
	=
	H_0(X;\mathbb Z_2)
	\cong
	\mathbb Z_2.
	\]
\end{proof}

\begin{proposition}\label{prop:interior-critical-point}
	Under the hypotheses of Theorem~\ref{thm:intro-homological}, the
	volume functional \(W\) defined in \eqref{eq:parameter-volume-functional} has a
	critical point when restricted to
	\(\operatorname{int}\Delta_\alpha\).
\end{proposition}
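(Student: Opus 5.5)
The plan is to apply Lemma~\ref{lem:relative-critical-point} with \(\Delta=\Delta_\alpha\), \(A\) and \(X,S\) as in \eqref{eq:AXS}, and \(F=W\). The needed homological input comes from Lemma~\ref{lem:relative-homology-general}. The hypothesis \eqref{eq:intro-homology-condition} gives \(H_{k+1}(X,S;\mathbb Z_2)\neq0\) for some \(k\geq-1\), so \(q=k+1\geq0\). Before using Lemma~\ref{lem:relative-critical-point}, I will check that \(A\) is a union of faces contained in \(\partial\Delta_\alpha\). This holds because \(\Ncal_+(U)\) is a simplicial complex (subsets of non-positively-spanning sets are non-positively-spanning) and \(E\notin\Ncal_+(U)\).

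Next I verify the analytic hypotheses one by one. Continuity of \(W\) on \(X\), the \(C^1\) property on \(\operatorname{int}\Delta_\alpha\), and the local \(C^1\) extensions near points of \(S\) (hypothesis (ii)) are exactly Lemma~\ref{lem:W-regularity}. In (ii), independence of \(DF(x)\) from the chosen extension follows because \(X\cap U_x\) contains \(\operatorname{int}\Delta_\alpha\cap U_x\), which is open and has \(x\) in its closure; the derivative of any \(C^1\) extension at \(x\) is therefore the limit of derivatives of \(W\) at nearby interior points. Boundedness below is Lemma~\ref{lem:W-lower-bound}, and the blow-up hypothesis (i) is Lemma~\ref{lem:W-blowup}.

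The main step is hypothesis (iii), producing a single outward descent direction \(Y_{t^0}\) at each \(t^0\in S\). The facets of \(\Delta_\alpha\) containing \(t^0\) are \(\{t_i=0\}\) for \(i\notin J(t^0)\), with \(\ell_i(t)=t_i\). For each such \(i\), the curve \eqref{eq:exit-curve} has velocity \(Z_i=e_i-\alpha_i t^0\) at \(s=0\), which lies in \(T=\{\sum\alpha_j v_j=0\}\). Since \(\widetilde W\) is \(C^1\) near \(t^0\) and the curve lies in \(X\) for \(s>0\), Lemma~\ref{lem:exit-derivative} gives \(DW(t^0)[Z_i]=-\tfrac np\alpha_iW(t^0)>0\). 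I would then take
\[
Y_{t^0}=-\sum_{i\notin J(t^0)}Z_i .
\]
This vector lies in \(T\) and satisfies \(DW(t^0)[Y_{t^0}]<0\). For \(i\notin J(t^0)\), its \(i\)-th coordinate is \(-1+\sum_{j\notin J(t^0)}\alpha_j t^0_j=-1\), since \(t^0_j=0\) for those \(j\). So \(Y_{t^0}\) points strictly outside every facet containing \(t^0\), and the index set is nonempty because \(t^0\in\partial\Delta_\alpha\).

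With (i)--(iii) and the nonvanishing relative homology in hand, Lemma~\ref{lem:relative-critical-point} yields a point \(t^*\in\operatorname{int}\Delta_\alpha\) with \(DW(t^*)=0\) on \(T\), which is the desired critical point. The only delicate point I anticipate is the sign bookkeeping in (iii). The exit curves are ascent directions pointing inward, so their negatives are both descent directions and outward-pointing. Everything else is assembled from the preceding lemmas.
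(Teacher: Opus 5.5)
Your proposal is correct and follows the paper's proof essentially step for step. It uses the same application of Lemma~\ref{lem:relative-critical-point} with \(F=W\), the same outward descent vector \(Y_{t^0}=-\sum_{i\notin J(t^0)}(e_i-\alpha_i t^0)\), and Lemma~\ref{lem:relative-homology-general} for the nonvanishing relative homology. One small slip: the \(i\)-th coordinate of \(Y_{t^0}\) is \(-1+t^0_i\sum_{j\notin J(t^0)}\alpha_j\), not \(-1+\sum_{j\notin J(t^0)}\alpha_j t^0_j\), but both correction terms vanish, so your conclusion that it equals \(-1\) stands.
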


\begin{proof}

We apply Lemma~\ref{lem:relative-critical-point} with
\(\Delta=\Delta_\alpha\) and \(F=W\).
Lemma~\ref{lem:W-lower-bound} shows that \(W\) is bounded below,
Lemma~\ref{lem:W-blowup} gives condition~\textup{(i)}, and
Lemma~\ref{lem:W-regularity} gives condition~\textup{(ii)}.
Condition~\textup{(iii)} is vacuous if \(S=\varnothing\). 

Assume
that \(S\neq\varnothing\), and fix \(t^0\in S\).  For \(i\notin J(t^0)\), the tangent vector to \eqref{eq:exit-curve} at $s=0$ is
\begin{equation}\label{eq:exit-tangent-vector}
	v_i=e_i-\alpha_i t^0,
\end{equation}
where \(e_i\) is the \(i\)-th coordinate vector. 
Since the curve \eqref{eq:exit-curve} lies in
\(\Delta_\alpha\),  \(v_i\) belongs to the
tangent space of \(\operatorname{aff}\Delta_\alpha\). Let \(\widetilde W\) be the local \(C^1\) extension supplied by
Lemma~\ref{lem:W-regularity}. Since \(t'(0+)=v_i\) and
\(\widetilde W=W\) along \(t(s)\) for all sufficiently small
\(s\geq0\),  \eqref{eq:exit-derivative} gives

\begin{equation}\label{DW}
	DW(t^0)[v_i]
	=
	D\widetilde W(t^0)[v_i]
	=
	\left.\frac{d}{ds}\right|_{s=0+}W(t(s))
	=
	-\frac np\alpha_iW(t^0)
	>
	0.
\end{equation}
Set
\begin{equation}\label{eq:outward-descent-vector}
	Y_{t^0}
	=
	-\sum_{i\notin J(t^0)}v_i.
\end{equation}
For \(r\notin J(t^0)\), the facet
\(
G_r=\{t\in\Delta_\alpha:t_r=0\}
\)
contains \(t^0\). Since \(t_r^0=0\),
\[
(Y_{t^0})_r
=
-\sum_{i\notin J(t^0)}
\bigl(\delta_{ir}-\alpha_i t_r^0\bigr)
=
-1.
\]
Taking \(\ell_{G_r}(t)=t_r\) as a defining affine function for
\(G_r\) in Lemma~\ref{lem:relative-critical-point}, we have
\(D\ell_{G_r}[Y_{t^0}]=(Y_{t^0})_r=-1<0\).
Since \(r\notin J(t^0)\) was arbitrary, \(Y_{t^0}\) points strictly
outside every facet of \(\Delta_\alpha\) containing \(t^0\).

Moreover, by \eqref{DW} and
\eqref{eq:outward-descent-vector},
\[
DW(t^0)[Y_{t^0}]
=
-\sum_{i\notin J(t^0)}DW(t^0)[v_i]
=
\frac np W(t^0)\sum_{i\notin J(t^0)}\alpha_i
<0.
\]
Thus, Lemma~\ref{lem:relative-critical-point} applies. Lemma~\ref{lem:relative-homology-general} supplies the required nonzero relative homology.
\end{proof}

\begin{proof}[Proof of Theorem~\ref{thm:intro-homological}]
	By Proposition~\ref{prop:interior-critical-point}, there exists
	\(t^*\in\operatorname{int}\Delta_\alpha\) which is a critical point
	of \(W|_{\operatorname{int}\Delta_\alpha}\). Thus
 the Lagrange
	multiplier theorem gives a constant \(\lambda\in\R\) such that
	\begin{equation}\label{eq:general-Lagrange}
		\frac{\partial W}{\partial t_i}(t^*)
		=
		\lambda\alpha_i,
		\qquad
		1\leq i\leq N.
	\end{equation}
Euler's identity for the homogeneous function in \eqref{eq:inverse-volume-homogeneity} gives
\begin{equation}\label{eq:general-Euler}
 \sum_{i=1}^Nt_i^*
 \frac{\partial W}{\partial t_i}(t^*)
 =\frac np W(t^*).
\end{equation}
Combining \eqref{eq:weighted-simplex}, \eqref{eq:general-Lagrange}, and \eqref{eq:general-Euler} yields
\begin{equation}\label{eq:general-lambda}
 \lambda=\frac np W(t^*).
\end{equation}
Let $P^*=P(t^*)$, and let $m_i^*$ be its $L_p$ surface area mass at $u_i$. Equations \eqref{eq:inverse-volume-derivative}, \eqref{eq:general-Lagrange}, and \eqref{eq:general-lambda} imply
\begin{equation}\label{eq:general-masses}
 m_i^*=nW(t^*)\alpha_i,
 \qquad 1\leq i\leq N.
\end{equation}
Set
\[
 h^*=\bigl((t_1^*)^{1/p},\ldots,(t_N^*)^{1/p}\bigr).
\]
Every $m_i^*$ is positive by \eqref{eq:general-masses} and \(t^*\in\operatorname{int}\Delta_\alpha\), and \eqref{eq:one-sided-mass} gives
\[
 m_i^*=(h_i^*)^{1-p}a_i(h^*).
\]
Since $h_i^*>0$, it follows that $a_i(h^*)>0$ for every $i$. Thus, each prescribed direction is a genuine facet normal. 
Finally, set
\[
 c=\bigl(nW(t^*)\bigr)^{-1/(n-p)}.
\]

Since
\[
S_p(P^*,\cdot)
=
\sum_{i=1}^N m_i^*\delta_{u_i}
=
nW(t^*)\sum_{i=1}^N\alpha_i\delta_{u_i}
\]
by \eqref{eq:general-masses}, 
\eqref{eq:Lp-scaling} gives
\[
S_p(cP^*,\cdot)
=
\sum_{i=1}^N\alpha_i\delta_{u_i}.
\]

\end{proof}

\begin{remark}
	The homological condition in
	Theorem~\ref{thm:intro-homological} depends only on the direction
	configuration \(U\) and guarantees solvability for every positive
	choice of masses; it is not claimed to be necessary for a fixed mass
	vector. Together with Lemma~\ref{lem:no-essential-subspaces}, the
	theorem recovers the discrete result in
	\cite[Theorem~1.1]{ZhuNegative}. Indeed, if \(U\) has no essential
	subspaces and is not contained in a closed hemisphere, then
	\eqref{eq:zhu-deletion-homology} holds, so
	Theorem~\ref{thm:intro-homological} applies. 
\end{remark}

\section{Antipodal and even data}\label{sec:antipodal-even}

We now derive some consequences of
Theorem~\ref{thm:intro-homological}, beginning with a complete
characterization for antipodal supports.

\begin{proof}[Proof of Theorem~\ref{thm:intro-antipodal}]
	Suppose that \eqref{eq:intro-antipodal-target} holds.
	Lemma~\ref{lem:discrete-solution-polytope} shows that the solution is a
	polytope whose facet normals are precisely the points \(\pm u_i\), and
	these normals positively span \(\R^n\). Since the support is antipodal,
	this is equivalent to \eqref{eq:intro-spanning-condition}.
	
	Conversely, assume \eqref{eq:intro-spanning-condition}, and set
	\(Z=(u_1,-u_1,\ldots,u_N,-u_N)\). The list \(Z\) is antipodal.
	Since \(Z=-Z\), we have
	\(\pos Z=\spanop Z\). Hence
	\(\pos Z=\spanop\{u_1,\ldots,u_N\}=\R^n\) by
	\eqref{eq:intro-spanning-condition}, and \(Z\) is ample by
	\eqref{eq:positive-span-separation}.
	 Hence
	Lemma~\ref{thm:closed-hemisphere-topology} gives
	\[
	\widetilde H_{2n-2}\bigl(\Ncal_+(Z);\mathbb Z_2\bigr)\neq0.
	\]
	Since \(Z\) has \(2N\) points and
	\(\Ncal_+(Z)^*=\Dcal_+(Z)\),
	\eqref{eq:Alexander-nonvanishing} yields
	\[
	\widetilde H_{2N-2n-1}
	\bigl(\Dcal_+(Z);\mathbb Z_2\bigr)\neq0,
	\]
	with the convention \eqref{eq:minus-one-homology} when \(N=n\).
	Theorem~\ref{thm:intro-homological}, applied to the directions in
	\(Z\) and the masses \(\alpha_i^+\) and \(\alpha_i^-\), therefore
	gives the required polytope, with outer unit normals exactly the
	points of \(Z\).
\end{proof}

For even discrete data, Theorem~\ref{thm:intro-antipodal} already
guarantees the existence of a solution, but it remains to show that
a solution can be chosen to be origin-symmetric. Write
\begin{equation}\label{eq:even-data}
	\mu
	=
	\sum_{i=1}^N\alpha_i(\delta_{u_i}+\delta_{-u_i}),
	\qquad
	\alpha_i>0,
\end{equation}
where \(u_1,\ldots,u_N\in S^{n-1}\) span \(\R^n\) and satisfy
\(u_i\neq\pm u_j\) whenever \(i\neq j\), and set \(E=\{1,\ldots,N\}\). For
\(t\in(0,\infty)^N\), define
\begin{equation}\label{eq:symmetric-polytope}
	P_e(t)
	=
	\bigcap_{i=1}^N
	\{x\in\R^n:|\langle x,u_i\rangle|\leq t_i^{1/p}\},
	\qquad
	W_e(t)=V(P_e(t)).
\end{equation}
The spanning assumption implies that \(P_e(t)\) is a bounded
origin-symmetric polytope containing the origin in its interior. Let
\[
m_i^e(t)
=
S_p(P_e(t),\{u_i\})
=
S_p(P_e(t),\{-u_i\}).
\]

\begin{lemma}\label{lem:symmetric-volume}
	The function \(W_e\) belongs to \(C^1((0,\infty)^N)\) and satisfies
	\begin{equation}\label{eq:symmetric-inverse-derivative}
		\frac{\partial W_e}{\partial t_i}(t)
		=
		\frac2p\,m_i^e(t),
		\qquad
		W_e(ct)=c^{n/p}W_e(t)
		\quad(c>0).
	\end{equation}
\end{lemma}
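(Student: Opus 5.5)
The plan is to reduce to Corollary~\ref{cor:inverse-volume-derivative}, applied to the doubled direction set. Set $Z=\{u_1,-u_1,\ldots,u_N,-u_N\}$. These $2N$ points are distinct because $u_i\neq\pm u_j$ for $i\neq j$, and $u_i\neq -u_i$. Since $Z=-Z$ and the $u_i$ span $\R^n$, we have $\pos Z=\spanop Z=\R^n$ by the remark following \eqref{eq:positive-span-separation}.

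Let $\mathcal V_Z$ and $W_Z$ denote the functions of Section~\ref{subsec:fixed-normal-volume} for the configuration $Z$, with coordinates $(s_1^+,s_1^-,\ldots,s_N^+,s_N^-)\in(0,\infty)^{2N}$. Since
\[
|\langle x,u_i\rangle|\leq h_i \iff \langle x,u_i\rangle\leq h_i \text{ and } \langle x,-u_i\rangle\leq h_i,
\]
we get $W_e(t)=W_Z(\iota(t))$, where $\iota(t)=(t_1,t_1,\ldots,t_N,t_N)$. The map $\iota$ is linear and smooth from $(0,\infty)^N$ into $(0,\infty)^{2N}$. Hence $W_e\in C^1((0,\infty)^N)$ by Corollary~\ref{cor:inverse-volume-derivative} and the chain rule.

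The chain rule also gives
\[
\frac{\partial W_e}{\partial t_i}(t)=\frac{\partial W_Z}{\partial s_i^+}(\iota(t))+\frac{\partial W_Z}{\partial s_i^-}(\iota(t))=\frac1p\bigl(m_{u_i}(\iota(t))+m_{-u_i}(\iota(t))\bigr),
\]
where $m_{\pm u_i}$ are the masses \eqref{eq:one-sided-mass} for $Z$. By \eqref{eq:polytope-Lp-mass} these are exactly $S_p(P_e(t),\{\pm u_i\})$, since $P(\iota(t))=P_e(t)$. This holds also when $\pm u_i$ is not a facet normal, as then both sides vanish.

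Next, $P_e(t)$ is origin-symmetric, because its defining inequalities are invariant under $x\mapsto -x$. Reflection through the origin maps the face with normal $u_i$ onto the face with normal $-u_i$, with the same area. Moreover $h_{P_e(t)}(u_i)=h_{P_e(t)}(-u_i)$. So the two masses coincide, both equal $m_i^e(t)$, and the derivative formula with factor $2/p$ follows.

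For homogeneity, replacing $t$ by $ct$ multiplies every bound $t_i^{1/p}$ by $c^{1/p}$, so $P_e(ct)=c^{1/p}P_e(t)$. Taking volumes gives $W_e(ct)=c^{n/p}W_e(t)$; equivalently, this follows from \eqref{eq:inverse-volume-homogeneity} since $\iota(ct)=c\,\iota(t)$.

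No step is a real obstacle. The only point needing care is the identification of the masses at $u_i$ and $-u_i$, including the degenerate case where $\pm u_i$ is not a facet normal, and that is handled by the symmetry argument above.
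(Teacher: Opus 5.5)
Your proposal is correct and follows the paper's proof. Both apply Corollary~\ref{cor:inverse-volume-derivative} to the doubled configuration $(u_1,-u_1,\ldots,u_N,-u_N)$ and restrict to the diagonal, so the chain rule sums the two partial derivatives. Origin-symmetry of $P_e(t)$ then equates the two masses, and homogeneity comes from $P_e(ct)=c^{1/p}P_e(t)$; you also spell out the distinctness, positive spanning, and degenerate-facet details that the paper leaves implicit.
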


\begin{proof}

	Apply Corollary~\ref{cor:inverse-volume-derivative} to the signed
	configuration
	\((u_1,-u_1,\ldots,u_N,-u_N)\), and restrict the corresponding
	\(2N\) variables to
	\((t_1,t_1,\ldots,t_N,t_N)\). This restriction shows that
	\(W_e\in C^1((0,\infty)^N)\). When \(t_i\) varies, the two variables
	corresponding to \(u_i\) and \(-u_i\) vary simultaneously, so the
	chain rule gives the sum of the two corresponding partial
	derivatives. Since \(P_e(t)\) is origin-symmetric, its \(L_p\)
	surface area masses in the directions \(u_i\) and \(-u_i\) are
	equal. This proves the first formula in
	\eqref{eq:symmetric-inverse-derivative}. 
	The homogeneity follows from 
	\(P_e(ct)=c^{1/p}P_e(t)\).

\end{proof}

Define the paired weighted simplex
\begin{equation}\label{eq:symmetric-weighted-simplex}
	\Delta_\alpha^e
	=
	\left\{t\in[0,\infty)^N:\sum_{i=1}^N\alpha_it_i=1\right\},
	\qquad
	J(t)=\{i:t_i>0\}.
\end{equation}
Let
\begin{equation}\label{eq:symmetric-nonspanning-complex}
	\Ncal_e(U)
	=
	\left\{I\subset E:\spanop\{u_i:i\in I\}\neq\R^n\right\}.
\end{equation}
For \(I\subset E\), put
\(\Delta_I^e=\{t\in\Delta_\alpha^e:t_j=0\text{ for }j\notin I\}\),
and set
\begin{equation}\label{eq:symmetric-AXS}
	A_e=\bigcup_{I\in\Ncal_e(U)}\Delta_I^e,
	\qquad
	X_e=\Delta_\alpha^e\setminus A_e,
	\qquad
	S_e=\partial\Delta_\alpha^e\setminus A_e.
\end{equation}
Since the full family spans \(\R^n\), we have
\(E\notin\Ncal_e(U)\); hence \(A_e\subset\partial\Delta_\alpha^e\) and
\(X_e=\operatorname{int}\Delta_\alpha^e\cup S_e\). Moreover,
\[
X_e
=
\left\{
t\in\Delta_\alpha^e:
\spanop\{u_i:i\in J(t)\}=\R^n
\right\}.
\]
For \(t\in\Delta_\alpha^e\), extend
\eqref{eq:symmetric-polytope} by omitting the zero coordinates:
\[
P_e(t)
=
\bigcap_{i\in J(t)}
\left\{
x\in\R^n:
|\langle x,u_i\rangle|\leq t_i^{1/p}
\right\}.
\]
Similarly to Subsection~\ref{subsec:weighted-parameter-simplex}, \(X_e\) is precisely the set of parameters for which
\(P_e(t)\) is bounded; equivalently, \(P_e(t)\) is unbounded
exactly when \(t\in A_e\). For \(t\in X_e\), set
\begin{equation}\label{eq:symmetric-boundary-functional}
	W_e(t)=V(P_e(t)).
\end{equation}

\begin{lemma}\label{lem:symmetric-critical-point}
	The volume functional \(W_e\) defined in
	\eqref{eq:symmetric-boundary-functional} has a critical point when
	restricted to \(\operatorname{int}\Delta_\alpha^e\).
\end{lemma}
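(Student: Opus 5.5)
The plan is to apply Lemma~\ref{lem:relative-critical-point} with \(\Delta=\Delta_\alpha^e\), \(A=A_e\), \(X=X_e\), \(S=S_e\) and \(F=W_e\), verifying its hypotheses by repeating the arguments of Section~\ref{sec:parameter} with slabs in place of half-spaces. The nonvanishing of relative homology will come from Lemma~\ref{lem:even-deletion-homology}, exactly as Lemma~\ref{lem:relative-homology-general} used \(\Dcal_+(U)\).

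\emph{Analytic hypotheses.} First, \(W_e\) is bounded below on \(X_e\). Indeed, \(t_i\le\alpha_i^{-1}\) forces every slab half-width to be at least \(\alpha_i^{-1/p}\), so the ball \(rB\) with \(r=\min_i\alpha_i^{-1/p}\) lies in every \(P_e(t)\). Second, condition (i) is the blow-up near \(A_e\). If \(t^k\to t^0\in A_e\), pick a unit \(v\) orthogonal to all \(u_j\), \(j\in J(t^0)\). Then \(\pm R_kv\in P_e(t^k)\) with
\[
R_k=\min_{i\notin J(t^0),\ \langle v,u_i\rangle\neq0}\rho_i^k/|\langle v,u_i\rangle|\to\infty,
\]
and this minimum is finite because \(t^k\in X_e\). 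The cone argument of Lemma~\ref{lem:W-blowup} then gives \(W_e(t^k)\to\infty\). Third, condition (ii) follows as in Lemma~\ref{lem:W-regularity}. Near \(t^0\in S_e\), with \(J=J(t^0)\) spanning, the slabs indexed by \(J\) give a uniformly bounded polytope. Slabs with tiny \(t_i\) have huge width and are therefore redundant, so \(W_e\) locally agrees with the \(C^1\) function \(t\mapsto V(\bigcap_{j\in J}\{|\langle x,u_j\rangle|\le t_j^{1/p}\})\), whose regularity follows from Lemma~\ref{lem:symmetric-volume} applied to the subfamily. Fourth, condition (iii) is the exit computation of Lemma~\ref{lem:exit-derivative}. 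Along \(t_i(s)=s\), \(t_j(s)=(1-\alpha_is)t_j^0\), we get \(P_e(t(s))=(1-\alpha_is)^{1/p}P_e(t^0)\) for small \(s\), hence \(DW_e(t^0)[v_i]=-\frac np\alpha_iW_e(t^0)>0\). The vector \(Y=-\sum_{i\notin J(t^0)}v_i\) then points strictly outside every facet containing \(t^0\) and satisfies \(DW_e(t^0)[Y]<0\), verbatim as in Proposition~\ref{prop:interior-critical-point}.

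\emph{Topological hypothesis.} This is where care is needed. Since \(X_e\) is star-shaped about an interior point, it is contractible. The map \(t\mapsto(\alpha_it_i)\) carries \(S_e\) onto \(\lvert\partial\Delta_E\rvert\setminus\lvert\Ncal_e(U)\rvert\). By \eqref{eq:Alexander-complement-model}, this set is homotopy equivalent to \(\lvert\Ncal_e(U)^*\rvert\). From the definitions, \(\Ncal_e(U)^*=\{J:\spanop\{u_i:i\in E\setminus J\}=\R^n\}=\Dcal_e(U)\).

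The case split is as follows. If \(N>n\), Lemma~\ref{lem:even-deletion-homology} gives \(\widetilde H_{N-n-1}(S_e;\mathbb Z_2)\neq0\), with \(N-n-1\ge0\). The long exact sequence of \((X_e,S_e)\), together with contractibility of \(X_e\), then yields \(H_{N-n}(X_e,S_e;\mathbb Z_2)\neq0\). If \(N=n\), then \(\Dcal_e(U)=\{\varnothing\}\), so every proper face lies in \(A_e\). Hence \(S_e=\varnothing\) and \(H_0(X_e,S_e;\mathbb Z_2)=\mathbb Z_2\).

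Lemma~\ref{lem:relative-critical-point} then produces the desired interior critical point. I expect the main obstacle to be only bookkeeping. The step most worth checking is the blow-up (i): one must handle both signs of \(\langle v,u_i\rangle\) so that \(\pm R_kv\) satisfies every slab constraint.
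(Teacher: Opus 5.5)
Your proposal is correct and follows essentially the same route as the paper. You apply Lemma~\ref{lem:relative-critical-point} to $W_e$ on $\Delta_\alpha^e$, verify (iii) by the exit curve, and obtain $H_{N-n}(X_e,S_e;\mathbb Z_2)\neq0$ from $\Ncal_e(U)^*=\Dcal_e(U)$, Lemma~\ref{lem:even-deletion-homology}, and the long exact sequence, including the case $N=n$ with $S_e=\varnothing$. The only difference is that the paper gets the lower bound and conditions (i)--(ii) by viewing $P_e(t)$ as the one-sided polytope of the doubled configuration $(u_1,-u_1,\ldots,u_N,-u_N)$ with weights $\alpha_i/2$ and citing Lemmas~\ref{lem:W-lower-bound}, \ref{lem:W-blowup} and \ref{lem:W-regularity}, whereas you re-prove these directly for slabs, which works equally well.
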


\begin{proof}
Apply the one-sided construction of
Subsection~\ref{subsec:weighted-parameter-simplex} to the signed
configuration
\((u_1,-u_1,\ldots,u_N,-u_N)\), assigning the weight
\(\alpha_i/2\) to each of \(u_i\) and \(-u_i\). Set the two
parameters corresponding to \(u_i\) and \(-u_i\) equal to \(t_i\).
Then the one-sided normalization reduces to
\(\sum_i\alpha_i t_i=1\), and the associated one-sided polytope is
exactly \(P_e(t)\). Moreover,
	\[
	\pos\{\pm u_i:i\in J(t)\}
	=
	\spanop\{u_i:i\in J(t)\}.
	\]
	Under this restriction, the one-sided definitions in
	\eqref{eq:AXS} and \eqref{eq:parameter-volume-functional} reduce
	exactly to \eqref{eq:symmetric-AXS} and
	\eqref{eq:symmetric-boundary-functional}. Hence
	Lemma~\ref{lem:W-lower-bound} shows that \(W_e\) is bounded below,
	Lemma~\ref{lem:W-blowup} gives condition~\textup{(i)}, and
	Lemma~\ref{lem:W-regularity} gives condition~\textup{(ii)} of
	Lemma~\ref{lem:relative-critical-point}.

	We verify condition~\textup{(iii)} by an argument similar to those
	in the proofs of Lemma~\ref{lem:exit-derivative} and
	Proposition~\ref{prop:interior-critical-point}.
	Condition~\textup{(iii)} is vacuous if \(S_e=\varnothing\). Suppose
	that \(S_e\neq\varnothing\), fix \(t^0\in S_e\), and let
	\(i\notin J(t^0)\). Along the curve \eqref{eq:exit-curve}, we have
	\(t(s)\in X_e\) for \(s>0\), and
	\[
	P_e(t(s))
	=
	(1-\alpha_i s)^{1/p}P_e(t^0)
	\cap
	\{x:|\langle x,u_i\rangle|\leq s^{1/p}\}.
	\]
	Since \(P_e(t^0)\) is bounded and \(s^{1/p}\to+\infty\),  for all sufficiently small \(s>0\), we have
	\[
	W_e(t(s))=(1-\alpha_i s)^{n/p}W_e(t^0).
	\]
For each \(i\notin J(t^0)\), set
\(v_i=t'(0+)=e_i-\alpha_i t^0\). Applying the chain rule to the
local \(C^1\) extension of \(W_e\) and differentiating the preceding
formula, we obtain
\[
DW_e(t^0)[v_i]
=
\left.\frac{d}{ds}\right|_{s=0+}W_e(t(s))
=
-\frac np\alpha_iW_e(t^0)
>
0.
\] The simplex calculation in the proof of
	Proposition~\ref{prop:interior-critical-point} therefore applies
	unchanged: the vector
	\(Y_{t^0}^e=-\sum_{i\notin J(t^0)}v_i\) belongs to the translation
	space of \(\operatorname{aff}\Delta_\alpha^e\), points strictly
	outside every facet containing \(t^0\), and satisfies
	\[
	DW_e(t^0)[Y_{t^0}^e]
	=
	\frac np W_e(t^0)
	\sum_{i\notin J(t^0)}\alpha_i
	<0,
	\]
	where the strict inequality follows from \(p<0\) and
	\(t^0\in\partial\Delta_\alpha^e\). Thus condition~\textup{(iii)} holds.
	
	By the definition \eqref{eq:combinatorial-Alexander-dual}, \eqref{eq:symmetric-nonspanning-complex} and \eqref{eq:even-deletion-complex},
	\[
	J\in\Ncal_e(U)^*
	\iff
	E\setminus J\notin\Ncal_e(U)
	\iff
	\spanop\{u_i:i\in E\setminus J\}=\R^n
	\iff
	J\in\Dcal_e(U).
	\]
	Hence \(\Ncal_e(U)^*=\Dcal_e(U)\), and
	Lemma~\ref{lem:even-deletion-homology} gives
	\[
	\widetilde H_{N-n-1}
	\bigl(\Ncal_e(U)^*;\mathbb Z_2\bigr)\neq0.
	\]
	As in the proof of Lemma~\ref{lem:relative-homology-general},
	\(X_e\) is contractible and
	\(S_e\simeq\lvert\Dcal_e(U)\rvert\).
	The relative homology argument in that proof, with
	\(k=N-n-1\), therefore gives
	\[
	H_{N-n}(X_e,S_e;\mathbb Z_2)\neq0.
	\]
	Lemma~\ref{lem:relative-critical-point} now applies with
	\(q=N-n\), proving the lemma.
\end{proof}

\begin{proof}[Proof of Corollary~\ref{cor:intro-even}]
	Necessity follows from Lemma~\ref{lem:discrete-solution-polytope}:
	for an antipodal support, positive spanning is equivalent to linear
	spanning, which in turn is equivalent to not being contained in a
	great subsphere.
	
	Conversely, write \(\mu\) as in \eqref{eq:even-data}. The hypothesis
	implies that \(u_1,\ldots,u_N\) span \(\R^n\). 
By Lemma~\ref{lem:symmetric-critical-point}, there exists
\(t^*\in\operatorname{int}\Delta_\alpha^e\) which is a critical
point of \(W_e|_{\operatorname{int}\Delta_\alpha^e}\).
Thus the Lagrange multiplier theorem gives a constant \(\lambda\in\R\)
such that
\[
\frac{\partial W_e}{\partial t_i}(t^*)
=
\lambda\alpha_i,
\qquad 1\leq i\leq N.
\]
	Euler's identity and \(\sum_i\alpha_it_i^*=1\) give
	\[
	\lambda
	=
	\sum_{i=1}^Nt_i^*\frac{\partial W_e}{\partial t_i}(t^*)
	=
	\frac np W_e(t^*).
	\]
	Together with \eqref{eq:symmetric-inverse-derivative}, this yields
	\[
	m_i^e(t^*)
	=
	\frac{nW_e(t^*)}{2}\alpha_i>0.
	\]
	Consequently, each \(u_i\) and \(-u_i\) is the outer unit normal of
	a facet of \(P_e(t^*)\), and
	\[
	S_p(P_e(t^*),\cdot)
	=
	\frac{nW_e(t^*)}{2}\mu.
	\]
	For
	\[
	c=\left(\frac{2}{nW_e(t^*)}\right)^{1/(n-p)},
	\]
	the scaling relation \eqref{eq:Lp-scaling} gives
	\(S_p(cP_e(t^*),\cdot)=\mu\). The polytope \(cP_e(t^*)\) is
	origin-symmetric and has outer unit normals exactly the points of
	\(\supp\mu\).
\end{proof}

\medskip
\noindent\emph{A planar counterexample.}
The following proposition shows that positive spanning alone is not
sufficient for general discrete data.

\begin{proposition}\label{prop:planar-counterexample}
	In \(\R^2\), let
	\begin{equation}\label{eq:counterexample-directions}
		u_1=(1,0),\qquad
		u_2=\left(-\frac12,\frac{\sqrt3}{2}\right),\qquad
		u_3=(-1,0),\qquad
		u_4=\left(-\frac12,-\frac{\sqrt3}{2}\right).
	\end{equation}
	Then
	\[
	\pos\{u_1,u_2,u_3,u_4\}=\R^2,
	\]
	but there is no \(P\in\mathcal K_o^2\) such that
	\begin{equation}\label{eq:counterexample-target}
		S_{-2}(P,\cdot)
		=
		\sum_{i=1}^4\delta_{u_i}.
	\end{equation}
\end{proposition}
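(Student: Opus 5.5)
The plan is to argue by contradiction and reduce \eqref{eq:counterexample-target} to a small algebraic system with no positive solution. First, \(\pos\{u_1,u_2,u_3,u_4\}=\R^2\) is immediate, since \(u_1,u_2,u_4\) are at angles \(0^\circ,120^\circ,240^\circ\) and \(u_1+u_2+u_4=0\).

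Suppose \(P\in\mathcal K_o^2\) satisfies \eqref{eq:counterexample-target}. By Lemma~\ref{lem:discrete-solution-polytope}, \(P\) is a polygon whose edge normals are exactly \(u_1,\dots,u_4\). Write \(h_i=h_P(u_i)>0\) and \(a_i\) for the edge lengths. By \eqref{eq:polytope-Lp-mass} with \(p=-2\), we have \(a_i=h_i^{-3}\).

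Next I use the closing condition \(\sum a_iu_i=0\), which holds because \(S_P\) has centroid at the origin. Its second coordinate gives \(a_2=a_4\), and its first coordinate gives \(a_1=a_2+a_3\). Hence \(h_2=h_4\). Since the normal set is invariant under the reflection \(y\mapsto -y\) and the support numbers are preserved, \(P\) is symmetric about the \(x\)-axis. It is therefore a trapezoid with two vertical edges and two slanted edges at angle \(30^\circ\) to the \(x\)-axis.

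Set \(w=h_1+h_3\), the horizontal width. Elementary geometry gives
\[
a_2=\frac{2w}{\sqrt3},\qquad a_1=a_3+\frac{2w}{\sqrt3}.
\]
The top-left vertex is \((-h_3,a_3/2)\), so
\[
h_2=\frac{h_3}{2}+\frac{\sqrt3}{4}a_3 .
\]
Substituting \(a_i=h_i^{-3}\) turns this into the system
\[
h_2=\frac{h_3}{2}+\frac{\sqrt3}{4}h_3^{-3},\qquad
h_2^{-3}=\frac{2}{\sqrt3}(h_1+h_3),\qquad
h_1^{-3}=h_2^{-3}+h_3^{-3},
\]
in the positive unknowns \(h_1,h_2,h_3\).

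The remaining task is to show that this system has no positive solution; this is the main obstacle. The third equation gives \(h_1\) as a function of \(h_2,h_3\), and the first gives \(h_2\) as a function of \(x=h_3\). The second equation then becomes a single equation \(g(x)=0\) in the one variable \(x>0\), and I must show that \(g\) has a strict sign.

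Using only \(h_1>0\) is not enough. That weaker inequality reduces to \(x\,h_2^3<\sqrt3/2\), and putting \(s=x^4\) one checks that \(x h_2^3=\tfrac{s}{8}\bigl(1+\tfrac{\sqrt3}{2s}\bigr)^3\) attains the minimum \(\tfrac{27}{32}\cdot\tfrac{\sqrt3}{2}<\tfrac{\sqrt3}{2}\). So the contradiction must use the precise value of \(h_1\).

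My approach is to substitute \(h_1=(h_2^{-3}+x^{-3})^{-1/3}\) and \(s=x^4\), so that everything is expressed through \(\sigma=1+\tfrac{\sqrt3}{2s}\). I would then show that \(\tfrac{2}{\sqrt3}(h_1+x)\,h_2^{3}>1\) for all \(x>0\), via a one-variable calculus or AM–GM estimate in \(s\). If a direct estimate fails, the fallback is to clear denominators and verify positivity of the resulting polynomial in \(s^{1/3}\) by checking its critical points. A positive \(g\) everywhere contradicts the existence of \(P\) and proves the proposition.
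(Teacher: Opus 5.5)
Your reduction is correct, and it is set up differently from the paper's: you get \(a_2=a_4\) and \(a_1=a_2+a_3\) from the closing condition \(\sum a_iu_i=0\), whereas the paper gets \(b=d\) from \(\ell_2=\ell_4\) using explicit vertex coordinates. The three displayed equations in \(h_1,h_2,h_3\) are equivalent to the existence of \(P\). But there is a genuine gap: the proof stops exactly where the content of the proposition lies. You never show that this system has no positive solution. The passage ``I would then show that \(\frac{2}{\sqrt3}(h_1+x)h_2^3>1\) for all \(x>0\) \dots\ If a direct estimate fails, the fallback is \dots'' is a plan, not an argument. As written, the proposal therefore does not prove the statement.

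The gap closes in a few lines along your own route. From the first equation, \(h_2=\frac x2\bigl(1+\frac{\sqrt3}{2}x^{-4}\bigr)>\frac x2\), so \(h_2^{-3}<8x^{-3}\). The third equation then gives \(h_1^{-3}<9x^{-3}\), that is, \(h_1>9^{-1/3}x>\frac x3\). You already computed the minimum \(xh_2^3\geq\frac{27\sqrt3}{64}\). Combining the two bounds yields
\[
\frac{2}{\sqrt3}(h_1+x)h_2^3>\frac{2}{\sqrt3}\cdot\frac43\,xh_2^3\geq\frac{2}{\sqrt3}\cdot\frac43\cdot\frac{27\sqrt3}{64}=\frac98>1,
\]
which contradicts the second equation. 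You correctly diagnosed that \(h_1>0\) alone falls short by the factor \(\frac{27}{32}\). The crude bound \(h_1>x/3\) recovers more than that factor, so no polynomial fallback is needed.

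For comparison, the paper uses only the homogeneous equal-mass relations and normalizes \(b=d=1\). It then derives \(c>1\) and \(a<\frac{11}{16}\) from \(a^3(a+2)=a+c=c^3(2-c)\). Your route instead uses the exact unit masses and the closing condition, and with the estimate above it is equally short.
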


\begin{proof}
Clearly, the directions in \eqref{eq:counterexample-directions}
positively span \(\R^2\).
	Suppose that a solution \(P\) of
	\eqref{eq:counterexample-target} exists. By
	Lemma~\ref{lem:discrete-solution-polytope}, \(P\) is a
	quadrilateral whose outer unit normals are precisely
	\(u_1,u_2,u_3,u_4\). Write \(a=h_P(u_1)\), \(b=h_P(u_2)\), \(c=h_P(u_3)\), and
	\(d=h_P(u_4)\).
	The four edges with outer normals \(u_1,u_2,u_3,u_4\) lie,
	respectively, on the lines
	\[
	x=a,\qquad
	-x+\sqrt3\,y=2b,\qquad
	x=-c,\qquad
	-x-\sqrt3\,y=2d.
	\]
	Each vertex lies on two adjacent edges, so its coordinates satisfy
	both corresponding equations. Solving the corresponding pairs of equations gives the vertex coordinates
	\[
	\left(a,\frac{a+2b}{\sqrt3}\right),
	\quad
	\left(-c,\frac{2b-c}{\sqrt3}\right),
	\quad
	\left(-c,\frac{c-2d}{\sqrt3}\right),
	\quad
	\left(a,-\frac{a+2d}{\sqrt3}\right).
	\]
	Let \(\ell_i\) denote the length of the edge with outer normal
	\(u_i\). Taking the distances between consecutive vertices gives
	\begin{equation}\label{eq:counterexample-edge-lengths}
		\ell_1=\frac{2}{\sqrt3}(a+b+d),
		\qquad
		\ell_2=\frac{2}{\sqrt3}(a+c),
		\qquad
		\ell_3=\frac{2}{\sqrt3}(b+d-c),
		\qquad
		\ell_4=\frac{2}{\sqrt3}(a+c).
	\end{equation}
	In particular, \(\ell_3>0\) implies \(b+d-c>0\).
	
	The \(L_{-2}\) mass of an edge with support number
	\(h\) and length \(\ell\) is \(h^3\ell\). Since the four masses in
	\eqref{eq:counterexample-target} are equal, 
	\eqref{eq:counterexample-edge-lengths} gives
	\begin{equation}\label{eq:counterexample-mass-system}
		a^3(a+b+d)
		=
		b^3(a+c)
		=
		c^3(b+d-c)
		=
		d^3(a+c).
	\end{equation}
	The second and fourth terms give \(b=d\). Since all four
	expressions in \eqref{eq:counterexample-mass-system} are
	homogeneous of degree four, we may
	therefore assume that \(b=d=1\). Then
	\begin{equation}\label{eq:counterexample-reduced-system}
		a^3(a+2)
		=
		a+c
		=
		c^3(2-c),
		\qquad
		0<c<2.
	\end{equation}
	
	Since \(c^3(2-c)=a+c>c\), we have
	\(c^2(2-c)>1\). For \(0<c\leq1\),
	\[
	c^2(2-c)
	\leq
	c(2-c)
	=
	1-(1-c)^2
	\leq1.
	\]
	Hence \(c>1\). On the other hand, the function
	\(q(c)=c^3(2-c)\) attains its maximum on \((0,2)\) at
	\(c=3/2\), and therefore
	\[
	c^3(2-c)\leq\frac{27}{16}.
	\]
	It follows from \eqref{eq:counterexample-reduced-system} and
	\(c>1\) that \(a<11/16\). Consequently,
	\begin{equation}\label{eq:counterexample-left-bound}
		a^3(a+2)
		<
		\left(\frac{11}{16}\right)^3
		\frac{43}{16}
		<
		1.
	\end{equation}
	However, \eqref{eq:counterexample-reduced-system} also gives
	\(a^3(a+2)=a+c>1\), contradicting
	\eqref{eq:counterexample-left-bound}.
\end{proof}

\begin{remark}
	For the direction set
	\(U=\{u_1,u_2,u_3,u_4\}\) in
	Proposition~\ref{prop:planar-counterexample}, a direct check gives
	\[
	\Dcal_+(U)=\{\varnothing,\{3\}\}.
	\]
	Thus \(\lvert\Dcal_+(U)\rvert\) is a single point, and its reduced
	homology vanishes in every degree. The homological hypothesis of
	Theorem~\ref{thm:intro-homological} therefore does not apply.
	
Nevertheless, the quadrilateral with support numbers
\(a=b=c=d=1\) has \(L_{-2}\) mass vector
\(\frac{2}{\sqrt3}(3,2,1,2)\), by
\eqref{eq:counterexample-edge-lengths}.
For the same directions \(U\), this mass vector is realizable,
whereas \((1,1,1,1)\) is not.
Thus, when the homological condition fails, solvability can
depend on the prescribed masses.
\end{remark}

\par\medskip
\noindent\textbf{Declaration of AI use.}
AI was used to assist with literature searches, particularly those used in Lemmas~\ref{thm:closed-hemisphere-topology}
and~\ref{lem:even-deletion-homology}, and with English-language
editing.
The author takes full responsibility for the content.

\end{document}